\theoremstyle{plain}
\theoremstyle{definition}
\newtheorem{theorem}{Theorem}[section]
\newtheorem{corollary}[theorem]{Corollary}
\newtheorem{definition}[theorem]{Definition}
\newtheorem{lemma}[theorem]{Lemma}
\newtheorem{remark}[theorem]{Remark}
\numberwithin{equation}{section}
\newcommand{\UC}{\mathbb{S}^{1}}
\newcommand{\orb}{\mathcal{O}}
\newcommand{\Homeo}{\mathrm{Homeo}}
\newcommand{\HHomeo}{\widetilde{\mathrm{Homeo}}}
\newcommand{\Ll}{\mathcal{L}}
\newcommand{\Q}{\mathbb{Q}}
\newcommand{\R}{\mathbb{R}}
\newcommand{\Zz}{\widehat{\mathbb{Z}}}
\newcommand{\Ss}{\mathcal{S}}
\newcommand{\Z}{\mathbb{Z}}
\newcommand{\Tn}{\mathbb{T}^{2}}
\newcommand{\T}{\mathbb{T}}
\newcommand{\To}{\longrightarrow}
\newcommand{\goTo}{\longmapsto}
\newcommand{\id}{\mathrm{id}}
\newcommand{\Cont}{\mathrm{C}}
\newcommand{\homeo}{\mathrm{Homeo}}
\newcommand{\HomeoL}{\widetilde{\mathrm{Homeo}_+}(\Ss)}
\newcommand{\HomeoS}{\mathrm{Homeo_+}(\Ss)}
\newcommand{\lp}{\mathrm{lp}}
\newcommand{\Hull}{\Omega}
\begin{document}%

\title{Dynamics of induced homeomorphisms of one-dimensional solenoids}
\author{ Francisco J. L\'opez--Hern\'andez}
\affil{Centro de Investigaci\'on en matem\'aticas A.C.}
\date{\today}
\maketitle
\begin{abstract}
We study the displacement function of homeomorphisms isotopic to the identity of the universal one-dimensional solenoid and we get a characterization of the lifting property for an  open and dense subgroup of the isotopy component of the identity. The dynamics of an element in this subgroup is also described using rotation theory.

\end{abstract}

\section*{Introduction}
H. Poincar\'e (see \cite{Poi})   introduced an invariant of topological
conjugation called the \emph{rotation number} for orientation-preserving  homeomorphisms of the unit circle, defined through the lifting property to the universal covering space $\R$ of the circle. \\

Denote by $\Homeo_{+}(\UC)$ the group of homeomorphisms which are orientation-preserving of $\UC$ and let $\HHomeo_+(\UC)$ be the space containing the real valued functions that are lifts of elements in $\Homeo_{+}(\UC).$ Define $\tau:\HHomeo_+(\UC)\To \R$ by
$$\tau(F)= \lim_{n\To\infty}\frac{F^{n}(x)-x}{n}.$$
Since the function $\tau$ is $\Z$--invariant and it does not depend on the choice of $x$,  it can be projected to $\rho(f)= \pi(\tau(f)) \in\UC.$  Therefore, for any $f\in\Homeo_{+}(\UC)$ we have a distinguished element $\rho(f)$  called the \emph{Poincar\'e's rotation number,} which gives information on the topological dynamics generated by $f$, summarized in the following two cases:
 \begin{enumerate}
\item $\rho(f)$ is rational if and only if $f$ has a periodic orbit;
\item if $\rho(f)$ is irrational, then  $f$ is semi-conjugate to the irrational rotation by $\rho(f).$
\end{enumerate}

\smallskip

 This theory was generalized to \emph{rotation sets} for  toral homeomorphisms isotopic to the identity in a similar way as in $\Homeo_{+}(\UC)$. The rotation set of any $f\in\Homeo_{+}(\Tn)$, also denoted by $\rho(f)$, is a compact and convex subset of $\R^{2}$ (see \cite{M-Z}).\\

 On the one hand, Frank's theorem gives dynamical information  (see \cite{Fra}) similar to part (1) of above   when $f\in\Homeo_{+}(\Tn)$ such that $\rho(f)$ has nonempty interior. If a vector $v$ lies in the interior of $\rho(f)$ and has both coordinates rational, then there is a periodic point $x\in\Tn$ with the property that
$$\frac{F^{q}(x_{0})-x_{0}}{q}=v,$$
where $x_{0}\in\R^{2}$ is any lift of $x$ and $q$ is the least period of $x$.\\

On the other hand, T. Jaeger obtained a version of the semi-conjugation theorem (see \cite{Jag}) as in part (2) of Poincar\'e 's theorem for irrational pseudo-rotations with bounded mean motion.\\

In the case of homeomorphisms from the real line with bounded displacement, J. Kwapisz in \cite{Kwa} gives a nice generalization of the rotation set presented in the torus case, particulary when the displacement function is an almost periodic function. Some aspects of the rotation theory for arbitrary manifolds were introduced by M. Pollicott in \cite{Pol} and recently this theory was generalized for solenoidal groups by A. Verjovsky and M. Cruz L\'opez in \cite {C-V},  giving a semi-conjugation theorem for irrational pseudo-rotations with bounded mean motion. A detailed summary on other generalizations of this theory can be found in \cite{A-J}.\\

Our goal in this article is to describe the dynamics of a certain class of homeomorphisms of the universal one--dimensional solenoid which can be described as follows. \\

The \emph{universal one--dimensional solenoid} is the inverse limit of the unbrached covering tower of $\UC$
 $$\Ss:=\lim_{\overleftarrow{n}} (\UC,p_{n}),$$ together with surjective homomorphisms  determined by projection onto the $n^{\text{th}}$ coordinate $$\pi_{n}:\Ss\To \UC.$$ The first projection $\pi_{1}$ defines a locally
 trivial $\Zz$--bundle structure $$\Zz\hookrightarrow \Ss \To \UC,$$ where
 $$\Zz:=\lim_{\overleftarrow{n}}\; \Z/n\Z$$ is the profinite completion of $\Z$, which
 is a compact perfect totally disconnected abelian topological group
 homeomorphic to the Cantor set. Since $\Z$  is residually finite, its profinite completion $\Zz$ admits a dense inclusion of $\Z$. There is also defined a dense canonical inclusion of $\R$,   $\sigma:\R\To\Ss$.\\

In Section \ref{displacements} of the article the displacement function for orientation-preserving   solenoidal homeomorphims isotopic to the identity  will be analyzed, and we will  compare this function with the displacement function for orientation-preserving  circle homeomorphisms. If $f\in\Homeo_{+}(\UC)$   we can lift it to some homeomorphism $F:\R\To\R$, and any lift satisfies that the displacement function 
 $$\delta:=F-\id:\R\To\R$$
 is periodic. The displacement function for   orientation-preserving  solenoidal homeomorphisms isotopic to the identity,  $\Homeo_{+}(\Ss)$, can be defined using their lifts to the appropriate covering space $\R\times\Zz$ (see Section \ref{solenoid}). These lifs can be described as $$(x,k)\goTo (F_{k}(x),k):=(x+\delta_{k}(x),k),$$
where $F_{k}:\R\To\R$ is an increasing continuous function and $\delta_{k}:\R\To\R$ is a bounded continuous function. Therefore for all $k\in\Zz$,  the \emph{displacement function} is defined as $$\delta_{k}=F_{k}-\id:\R\To\R.$$

Recall now the $\R$-action in $C(\R)$ given by $$(s,\delta)\goTo\delta^{s}$$
where $\delta^{s}:\R\To\R$ is defined by $\delta^{s}(x)=\delta(x+s)$, and denote by $\Hull(\delta)$ the closure of the orbit of  $\delta$. The function  $\delta$  is called  \emph{almost periodic} if $\Hull(\delta)$ is compact. In this case it is possible to  define on it a group structure. The classification theorem for almost periodic functions (see \cite{Bohr}) states that $\Hull(\delta)$ is isomorphic to the circle in the \emph{periodic} case, $\T^{n}$ with $n>1$ in the \emph{quasi-periodic} case or a solenoidal group in the \emph{purely limit periodic} case.\\

If $\delta_{0}$ is the displacement function of $f\in\Homeo_{+}(\Ss)$ for $k=0$, it will be proved that $\Omega(\delta_0)$ is a quotient topological group of $\Ss$ of the form  $$\Hull(\delta_{0})\cong \Ss/\ker(K),$$
where $\ker(K)$ is the kernel of a certain continuous homomorphism $K:\Ss\To\Hull(\delta_{0})$. An immediate consequence of this fact is the next description of the displacement function for  orientation-preserving solenoidal homeomorphims isotopic to identity.\\

 \textbf{Theorem \ref{lp}}
 For all $k\in\Zz$ the  displacement function $\delta_{k}:\R\To\R$ is periodic or purely limit periodic.\\

In Section \ref{semi-conjugation} is proved that given $f\in\Homeo_{+}(\Ss)$ with displacement function for $k=0$ denoted by $\delta_{0}$, there exists an  orientation-preserving  homeomorphism istopic to the identity $g:\Hull(\delta_{0})\To \Hull(\delta_{0})$ such that $g$ is  semi-conjugated to $f$ by $K$, i.e. the following diagram commutes
 	\[\xymatrix{\Ss \ar[r]^{f}\ar[d]^{K}& \Ss\ar[d]^{K} \\
 		\Hull(\delta_{0})\ar[r]^{g}& \Hull(\delta_{0})}\]

An example on how this semi-conjugation gives us  information on the dynamics generated by $f$ is when $$\Hull(\delta_{0})\cong \Ss/\ker(K)\cong \UC$$ called the subspace of \emph{induced homeomorphisms} by a periodic function and denoted by $\Homeo_{I}(\Ss)$ (compare with \cite{R-T-L}). This group is studied in Section \ref{Inducedhomeos} for the case of orientation-preserving homeomorphisms isotopic to the identity. \\

In the first part of the section,  a description of such homeomorphisms is given, in which  for some $n\in\Z_{+}$, we have an orientation-preserving  homeomorphism $f_{n}:\R/n\Z\To\R/n\Z$ such that the following  diagram commutes:
 	\[\xymatrix{\Ss \ar[r]^{f}\ar[d]^{\pi_{n}}& \Ss\ar[d]^{\pi_{n}} \\
 		\R/n\Z \ar[r]^{f_{n}}& \R/n\Z}\]

 Also we will give a description of the lift of these kinds of homeomorphisms, which will be denoted by $\Homeo_{I_{n}}(\Ss)$ for each $n\in\Z_{+}$, and will be called  \emph{induced homeomorphism of degree $n$} by the homeomorphims $f_{n}\in\Homeo_{+}(\R/n\Z)$. A first observation is that if $n|m$, where $n,m\in\Z$, then $$\Homeo_{I_{n}}(\Ss)\subset\Homeo_{I_{m}}(\Ss),$$ and we will have that
   $$\Homeo_{I}(\Ss)=\lim_{\overrightarrow{n}}\Homeo_{I_{n}}(\Ss)=\bigcup_{n\in\Z_{+}}\Homeo_{I_{n}}(\Ss)$$

 This subspace has a group structure and coincides with the dense subspace of $\Homeo_{+}(\Ss)$ which has periodic displacement. Also we will prove that $$\Homeo_{I_{1}}(\Ss)\cong\HHomeo_{+}(\UC),$$
fitting  the  universal central extension (see \cite{Ghys})
$$0\To\Z\To\HHomeo_{+}(\UC)\To\Homeo_{+}(\UC)\To 1$$
into the diagram
\[\xymatrix{0\ar[r]\ar[d]&\Z\ar[r]\ar[d]& \HHomeo_{+}(\UC)\ar[r]\ar[d]&\Homeo_{+}(\UC)\ar[r]\ar[d]& 1\ar[d]\\
	0\ar[r]&R_{\alpha}\ar[r]& \Homeo_{I_{1}}(\Ss)\ar[r]&\Homeo_{+}(\UC)\ar[r]& 1}\]
where $R_{\alpha}$ denotes integer translations in $\Ss$.\\

Finally, the dynamics of  induced homeomorphisms can be described using the Poincar\'e theory for $\homeo_{+}(\UC)$. In \cite{C-V}, for general homeomorphisms which are isotopic to the identity, the authors study the irrational case and do not consider rational dynamics.  In order to complete the dynamical picture of Poincar\'e theory in the case of induced homeomorphisms, we introduce first a convenient concept which captures the idea of rationality.\\

\textbf{Definition \ref{fiberperiodic}}
	Let $f\in\Homeo_{+}(\Ss)$. We will say that $s\in\Ss$ is   \emph{$p/q$-fiber periodic} if there  are $p,q\in\Z_+$ such that $$f^{q}(s)=s+\sigma(p).$$
	 Here the sum is the Abelian sum along the leaves. Note that the name ``fiber periodic"  is due to the fact that the point is returning to the $p$-fiber in $s$ periodically after $q$ times, and we refer to the orbit of $s$ as a \emph{$p/q$-fiber}.
\\

The relationship between the dynamics generated by the homeomorphism  $f_{1}\in\Homeo_{+}(\UC)$ and the dynamics generated by the induced homeomorphism $f\in\Homeo_{I}(\Ss)$
is described in next.\\

\textbf{Theorem \ref{dynamics}}
Let $f\in\Homeo_{+}(\Ss)$ be induced by a homeomorphism $f_{1}\in\Homeo_{+}(\UC).$
\begin{enumerate}
\item If $\rho(f)=p/q,$ then any point $s\in\Ss$ is a $p/q$-fiber periodic point or the orbit of $s$ is asymptotic to the orbit of a $p/q$-fiber periodic point.
 \item If $\rho(f)\notin\Q$, then $f$ is semi-conjugate to the rotation by  $\rho(f).$
\end{enumerate}

\smallskip
Section \ref{solenoid} introduces the universal one-dimensional solenoid and the lifting properties of its homeomorphisms. In Section \ref{displacements} we study the displacement function for  orientation-preserving solenoidal homeomorphisms isotopic to the identity. Section \ref{semi-conjugation} deals with the semi-conjugation to a quotient dynamics, and finally, Section \ref{Inducedhomeos} talks about the dynamics generated by induced homeomorphisms.

\section{The solenoid and its homeomorphisms that are isotopic to the identity}\label{solenoid}
In this section, the universal one-dimensional solenoid will be introduced which is the space where we are interested in studying the dynamics generated by orientation-preserving homeomorphisms isotopic to the identity. Also these kinds of homeomorphims and their lifting properties will be studied  the end of this section.\\ 
    \subsection{The solenoid}
 For every integer $n\geq 1$ we have defined the unbranched covering space of degree $n$  by $p_n:\UC \To \UC$, $z\longmapsto {z^n}$.  If $n,m\in \Z_{+}$ and $n$ divides $m$, then
 there exists a unique covering map $p_{nm}:\UC\To \UC$ such that
 $p_n \circ p_{nm} = p_m$. This determines a projective system of covering spaces $\{\UC,p_n\}_{n\geq 1}$
 whose projective limit is the \emph{universal one--dimensional solenoid}
 $$\Ss:=\lim_{\overleftarrow{n}} (\UC,p_{n}).$$ We have canonical projections  determined by projection onto the $n^{\text{th}}$ coordinate $$\pi_{n}:\Ss\To \UC.$$ This determines a locally
 trivial $\Zz$--bundle structure $\Zz\hookrightarrow \Ss \To \UC$, where
 $$\Zz:=\lim_{\overleftarrow{n}}\; \Z/m\Z$$ is \emph{the profinite completion of $\Z$}, which
 is a compact perfect totally disconnected abelian topological group
 homeomorphic to the Cantor set. Since $\Zz$ is the  profinite completion of $\Z$, we have canonical projections  determined by projection onto the $n^{\text{th}}$ coordinate $$p_{n}:\Zz\To \Z/n\Z,$$
and  $\Zz$ admits a canonical inclusion $i:\Z\To\Zz$ defined by $t\goTo(p_{n}(t))_{n\in\Z_{+}},$ whose image is dense. We will use $t$ to denote $i(t)\in\Zz$.\\

 We can define a properly discontinuously free action of $\Z$ on $\R\times \Zz$  by
 $$t\cdot (x,k) := (x+t,k-t) \quad (t\in \Z).$$
Here $\Ss$ is identified with the orbit space
 $\R\times_{\Z} \Zz \equiv \R\times \Zz / \Z$, and  $\Z$ is acting on $\R$ by
 covering transformations and on $\Zz$ by translations. The path--connected component $\Ll_0$ of the identity element
 $0\in \Ss$ is called the \emph{base leaf}. Clearly, $\Ll_0$ is the image of
 $\R\times \{0\}$ under the canonical projection $\R\times \Zz\To \Ss$ and it is homeomorphic to $\R$.
 \\

 In summary, $\Ss$ is a compact connected abelian topological group and also is a
 one--dimensional lamination where each ``leaf" is a simply connected one--dimensional manifold homeomorphic to the universal covering space $\R$ of $\UC$, and a typical 'transversal section' is isomorphic to the Cantor group $\Zz$.\\

\subsection{ Homeomorphisms that are isotopic to the identity}
 Let $p:\R\times \Zz\To \Ss$ denote the canonical projection. Then $p$
 is an infinite cyclic covering and we have a lifting property of homeomorphisms. The space of  orientation-preserving homeomorphisms isotopic to the identity   of $\Ss$ is denoted by $\Homeo_{+}(\Ss),$ and the space containing all the liftings of homeomorphisms in $\Homeo_{+}(\Ss)$ will be denoted by $\HHomeo_{+}(\Ss)$. By \cite{Kwa2} we have a complete description of the homeomorphisms in $\HHomeo_{+}(\Ss)$.\\

 Let $F:\R\times \Zz\To \R\times \Zz$ be a lifting of $f\in \HomeoS$ to
 $\R\times \Zz$. Then $F$ has the form
 $$F(x,k) = (F_k (x),k),$$
where $F_{k}:\R\To\R$ satisfies the condition of being equivariant with respect to the $\Z$--action:
 $$F_{k-t}(x+t) = F_k (x) + t$$
 for any $t\in \Z$. We have  a continuous function given by $\Zz\To \homeo_{+}(\R)$
 $k\longmapsto F_k$, where $F_t:\R\To \R,$
 and $R_t:\Zz \To \Zz$ is a minimal translation. This implies that $F$  commutes with the integral
 translation $T_t:\R\times \Zz\To \R\times \Zz$ given by
 $$(x,k)\longmapsto (x+t,k-t)$$ and also must be invariant under the
 $\Z$--action in $\Cont(\Zz,\homeo(\R))$.\\

Moreover, $F_k:=\id+\delta_{k}$ where $\delta_{k}:\R\To\R$ is an increasing, bounded and continuous function. The function $\Delta:\R\times\Zz\To\R$ defined by $(x,k)\goTo \delta_{k}(x)$ satisfies $$T_{t}\circ\Delta(x,k)=\Delta(x,k),$$ i.e is invariant by integral translation and induces a continuous functions $\delta:\Ss\to\R$ such that $$f:\id+i\circ\delta.$$

 Denote by $\HomeoL$ the set of all such homeomorphisms. The $\R-$displacement of the homeomorphism in $\HomeoL$ will be described in the following section.

\section{Limit periodic displacements}\label{displacements}
 This study began in the work  \cite{Lop} where  the displacement function  is introduced and studied as in this section.\\
 
 If $F\in\HHomeo_{+}(\Ss)$, then the \emph{displacement function}  $D_{F}:\widehat{\Z}\To C(\R)$ can be defined  as
 $$k\goTo F_{k}-\id=\delta_k$$ For all $k\in\Zz$   
 
 \begin{lemma}
  If $F\in\HHomeo_{+}(\Ss)$,  the displacement function
 is continuous and closed.
 \end{lemma}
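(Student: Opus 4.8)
The plan is to reduce everything to the continuity of the total displacement $\Delta:\R\times\Zz\To\R$, $(x,k)\goTo\delta_k(x)$, established above, together with the compactness of $\Zz$. I read $C(\R)$ with the topology of uniform convergence, which is the relevant one for the hull construction that follows and makes sense because each $\delta_k$ is bounded; note that $C(\R)$ is then Hausdorff. With this reading, ``$D_F$ is closed'' means that $D_F$ sends closed sets to closed sets, and once continuity is in hand this is automatic: a closed subset of the compact space $\Zz$ is compact, its continuous image is compact, and a compact subset of the Hausdorff space $C(\R)$ is closed. In particular the image $D_F(\Zz)$ is itself compact, which is exactly the kind of statement needed later for the hull.

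So the real content is the continuity of $k\goTo\delta_k$ in the uniform norm, and the only difficulty is that the variable $x$ ranges over the non-compact line $\R$; I would eliminate it using the $\Z$-equivariance. Rewriting $F_{k-t}(x+t)=F_k(x)+t$ in terms of the displacement gives $\delta_{k-t}(x+t)=\delta_k(x)$, equivalently $\Delta(x+m,k)=\Delta(x,k+m)$ for every $m\in\Z$. Hence for arbitrary $x\in\R$, writing $x=m+x_0$ with $m=\lfloor x\rfloor$ and $x_0\in[0,1)$, one has $\delta_k(x)=\Delta(x_0,k+m)$, so that every value of $\delta_k$ is a value of $\Delta$ on the compact slab $[0,1]\times\Zz$.

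Next I would fix a translation-invariant metric on the compact abelian group $\Zz$, which exists since $\Zz$ is a compact metrizable group, and use that $\Delta$ is uniformly continuous on the compact set $[0,1]\times\Zz$. Given $\eps>0$, uniform continuity yields $\eta>0$ such that $d_{\Zz}(\kappa,\kappa')<\eta$ forces $\abs{\Delta(x_0,\kappa)-\Delta(x_0,\kappa')}<\eps$ for all $x_0\in[0,1]$. For $k,k'\in\Zz$ with $d_{\Zz}(k,k')<\eta$, translation invariance gives $d_{\Zz}(k+m,k'+m)=d_{\Zz}(k,k')<\eta$ for every $m\in\Z$, whence $\abs{\delta_k(x)-\delta_{k'}(x)}=\abs{\Delta(x_0,k+m)-\Delta(x_0,k'+m)}<\eps$ for every $x\in\R$. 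Taking the supremum over $x$ gives $\norm{\delta_k-\delta_{k'}}_{\infty}\le\eps$, so $D_F$ is in fact uniformly continuous, not merely continuous.

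The step I expect to be the main obstacle is precisely this passage from the compact transversal to the full line: everything hinges on packaging the $\Z$-equivariance so that translating $x$ by an integer becomes translating $k$ by an integer, together with the choice of an invariant metric on $\Zz$ so that these integer translations do not distort distances. Once this is arranged, continuity in the uniform norm is immediate and closedness is only the remark about compact sets in a Hausdorff space. If one prefers the compact--open topology on $C(\R)$, the same conclusion follows even more quickly from the exponential law applied to the continuous $\Delta$, with closedness again coming from the compactness of $\Zz$.
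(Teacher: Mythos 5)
Your proof is correct, and on the closedness half it coincides with the paper's argument: the paper also disposes of closedness by the closed map theorem (a continuous map from the compact space $\Zz$ to the Hausdorff space $C(\R)$ sends closed, hence compact, sets to compact, hence closed, sets). Where you genuinely diverge is the continuity half. The paper takes $C(\R)$ with the compact-open topology and gets continuity of $D_{F}$ in one line from the exponential law applied to the continuous map $\Delta:\R\times\Zz\To\R$ --- precisely the shortcut you mention in your closing remark. Your main argument instead establishes continuity of $k\goTo\delta_{k}$ into $C(\R)$ with the topology of \emph{uniform} convergence, via the equivariance identity $\delta_{k}(x)=\Delta(x_{0},k+m)$, uniform continuity of $\Delta$ on the compact slab $[0,1]\times\Zz$, and a translation-invariant metric on $\Zz$; all of these steps check out. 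This is a strictly stronger conclusion, since the uniform topology on $C(\R)$ is finer than the compact-open one, and it is arguably the statement the paper actually needs afterwards: Bohr's classification of almost periodic functions, invoked for $\Hull(\delta_{0})$ in Section~\ref{displacements}, is a uniform-norm theory, whereas compactness of an orbit closure in the compact-open topology amounts only to boundedness plus equicontinuity of the translates (Arzel\`a--Ascoli) and does not by itself give Bohr almost periodicity. So the paper's route is immediate but yields the weaker topology statement, while your route costs a page of estimates and buys uniform continuity of $D_{F}$ in the sup norm, which feeds directly into the hull construction.
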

 \begin{proof}
  Since $\delta:\R\times\Zz\To\R$ is continuous, by the exponential law $A_{F}$ is continuous. Applying the closed function theorem we conclude that $A_{F}$ is closed.
\end{proof}
 Remember that if $C(\R)$ denote the set of all continuous functions from $\R$ to $\R$ with the compact-open topology, then  an $\R-$action on  $C(\R)$  is defined by  $$\R\times C(\R)\To C(\R),\quad (\varphi,t)\goTo \varphi^{t},$$
 where $\varphi^{t}:\R\To\R$ is given by
 $$\varphi^{t}(x):=\varphi(x+t).$$
 Denote by $\orb_{\R}(\varphi)$  the orbit of $\varphi$ under this action,  and by $\Hull(\varphi)$  the  closure of $\orb_{\R}(\varphi)$ in $C(\R)$. We will say that $\varphi$  is \emph{almost periodic} if $\Hull(\varphi)$ is compact. In this case we can define a group structure `` $*$''  so that  for  
 $\lim_{n\To\infty}\varphi^{t_{n}}*\lim_{n\To\infty}\varphi^{s_{n}}=\lim_{n\To\infty}\varphi^{t_{n}+s_{n}}.$ Note that 
 $(\Hull(\varphi),*)$ is a compact abelian  topological group with neutral element $\varphi$.\\

  If $\alpha:\Ss\To\R$ is a continuous function, define
  $$K_{0}:\Ll_0\subset\Ss\To C(\R),\;0 \goTo \alpha_{0},$$
  where $$\alpha_{0}:\R\To\R$$
  is defined by
  $$\alpha_0:=\alpha\circ\sigma,$$
 and $\sigma:\R\hookrightarrow\Ss$ is the   1-parameter dense subgroup. By definition
  $$\orb(\alpha_{0}):=\{\alpha_{0}^{t}\in C(\R):\alpha_{0}^{t}(x)=\alpha_{0}^{t}(x+t)\}.$$
 Then
 $$K_{0}(\sigma(t))=\alpha_{0}^{t}\quad(t\in\R).$$
 %
  \begin{theorem}
 For any continuous function $\alpha:\Ss\To\R$, the function $K_{0}$ can be extended to a  continuous   and surjective homomorphism  $K:\Ss\To\Hull(\alpha_0)$. Therefore $\Hull(\alpha_{0})$ is a quotient group from $\Ss$.
 \end{theorem}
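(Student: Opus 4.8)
The plan is to realize $\Hull(\alpha_0)$ as the continuous image of $\Ss$ under a single map built from the group translations of the solenoid, and then to read off every required property from compactness together with the density of the base leaf. Concretely, I would define $K:\Ss\To C(\R)$ by sending $s$ to the function $x\mapsto\alpha(\sigma(x)+s)$; this lies in $C(\R)$ since $\sigma$, the group operation and $\alpha$ are all continuous. Because $\Ss$ is abelian and $\sigma:\R\To\Ss$ is a homomorphism with dense image, $\sigma(x)+\sigma(t)=\sigma(x+t)$, and so $K(\sigma(t))(x)=\alpha_0(x+t)=\alpha_0^t(x)$. Thus $K$ agrees with $K_0$ on the base leaf $\Ll_0=\sigma(\R)$ and is the desired extension, provided it lands in $\Hull(\alpha_0)$ and has the stated properties.

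First I would establish continuity of $K$, in fact into the bounded continuous functions with the sup norm. Since $\alpha$ is continuous on the compact group $\Ss$ it is uniformly continuous, so given $\eps>0$ there is a neighborhood $V$ of $0$ with $|\alpha(a)-\alpha(b)|<\eps$ whenever $a-b\in V$. The crucial observation is that $(\sigma(x)+s)-(\sigma(x)+s_0)=s-s_0$ is independent of $x$; hence $s-s_0\in V$ forces $\sup_{x\in\R}|K(s)(x)-K(s_0)(x)|<\eps$. This gives uniform, not merely compact-open, continuity of $K$, and therefore $K(\Ss)$ is compact in $C(\R)$.

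Next I would identify the image. By construction $K(\sigma(\R))=\orb_{\R}(\alpha_0)$, so $K(\Ss)\supseteq\orb_{\R}(\alpha_0)$; being compact, $K(\Ss)$ is closed, hence contains $\Hull(\alpha_0)$. Conversely, using $\Ss=\overline{\sigma(\R)}$ and continuity of $K$ one gets $K(\Ss)\subseteq\overline{K(\sigma(\R))}=\Hull(\alpha_0)$. Therefore $K(\Ss)=\Hull(\alpha_0)$; in particular $\Hull(\alpha_0)$ is compact, so $\alpha_0$ is almost periodic, carries the group law $*$ described above, and $K$ is surjective onto it.

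Finally, the homomorphism property. On the dense subgroup $\sigma(\R)$ it holds by the very definition of $*$, since $K(\sigma(t)+\sigma(s))=\alpha_0^{t+s}=\alpha_0^t*\alpha_0^s=K(\sigma(t))*K(\sigma(s))$. Both maps $(s_1,s_2)\mapsto K(s_1+s_2)$ and $(s_1,s_2)\mapsto K(s_1)*K(s_2)$ are continuous from $\Ss\times\Ss$ into the Hausdorff topological group $(\Hull(\alpha_0),*)$ and agree on the dense set $\sigma(\R)\times\sigma(\R)$, so they agree everywhere, and $K$ is a homomorphism. The one point deserving care, which I regard as the main obstacle, is the interplay between the sup-norm continuity used above and the compact-open topology in which $*$ is defined: I must carry out the homomorphism identity in the compact-open topology, and this is precisely where compactness of $\Hull(\alpha_0)$ and the joint continuity of $*$ granted by the classification of almost periodic functions are needed.
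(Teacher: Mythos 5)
Your proposal is correct and follows essentially the same route as the paper: both realize the extension as $s\mapsto\bigl(x\mapsto\alpha(\sigma(x)+s)\bigr)$, use uniform continuity of $\alpha$ on the compact group $\Ss$ together with density of $\sigma(\R)$, and obtain surjectivity from compactness of the image plus density of $\orb(\alpha_0)$ in $\Hull(\alpha_0)$. The difference is only one of presentation: the paper extends $K_0$ ``naturally'' by density and leaves the continuity, surjectivity and homomorphism verifications largely implicit, whereas you write the extension as an explicit global formula and check these points (including the joint continuity of $*$ needed for your density argument, and the resulting almost periodicity of $\alpha_0$) in full detail.
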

 \begin{proof}
 First we will prove that $K_{0}:\Ll_0\To\Hull(\alpha_{0})$ defines a  surjective homomorphism. If $\varphi\in\orb(\alpha_{0}),$ then $\varphi(t)=\alpha_{0}(t+r)$
 for some $r\in\R$. By definition
 \begin{eqnarray*}
 \alpha_{0}(t+r)&=&\alpha(\sigma(t+r))\\
                 &=&\alpha(\sigma(t)+\sigma(r))\\
                 &=&\alpha_{\sigma(r)}(t),
 \end{eqnarray*}
  for all $t,r\in\R.$ This tell us that $K_{0}$ is surjective. Also, the products in $\Ll_{0}$ and $\orb(\alpha_{0})$ are given by the additive structure in $\R,$ which means $K_{0}$ is a continuous homomorphism. Note that this homomorphism can be naturally extended to a surjective homomorphism
 $$K:\Ss\To \Hull(\alpha_{0}),$$
  Since $\Ss$ is compact and $\Ll_{0}$ is dense, the traslation flow is an isometry, $\alpha$ is uniformly continous and $K_{0}$ is continuous. The  image of   $K_{0}$ under this extension  to $\Ss$ is closed in $\Hull(\alpha_{0})$ and must contain  $\orb(\alpha_{0}).$ Since $\orb(\alpha_{0})$ is dense in $\Hull(\alpha_{0})$, it follows  that $\mathrm{Im}(K)=\Hull(\alpha_{0}).$ Applying the first isomorphism  theorem, we conclude that
 $$\Hull(\alpha_{0})\cong\Ss/\mathrm{ker}(K).$$
 \end{proof}
 \begin{remark}
  Since $\Hull(\alpha_{0})$ is a  quotient group  of $\Ss,$ it follows that $\Hull(\alpha_{0})$ is compact and $\alpha_{0}$ is almost-periodic. If $\alpha_{0}$ is quasi-periodic, then
   $$\T^{n}\cong\Hull(\alpha_{0})\cong\Ss/\mathrm{ker}(K)$$
    for some $n\geq 2.$ Given that $\T^{n}$ cannot be a quotient group  of $\Ss$ for $n\geq 2,$ it follows that $\delta'_{0}$ is not quasi-periodic, so must be  periodic or purely limit periodic.
 \end{remark}
 \begin{theorem}\label{lp}
 For all $k\in\Zz$ the  displacement function $\delta_{k}:\R\To\R$ is periodic or purely limit periodic.
 \end{theorem}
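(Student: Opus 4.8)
The plan is to reduce the statement for arbitrary $k\in\Zz$ to the case $k=0$, which is already settled by the preceding theorem and its remark. Recall that the displacement data assemble into a continuous function $\delta\colon\Ss\To\R$ with $\delta\circ p=\Delta$, so that $\delta_k(x)=\delta(p(x,k))$ for every $x\in\R$ and $k\in\Zz$. The case $k=0$ reads $\delta_0=\delta\circ\sigma$, and applying the previous theorem with $\alpha=\delta$ together with the remark gives at once that $\delta_0$ is periodic or purely limit periodic. The goal is to exhibit each $\delta_k$ as the base-leaf trace of a suitable continuous function on $\Ss$, so that the same theorem-plus-remark applies verbatim.

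The key step is an algebraic identity coming from the fact that $p\colon\R\times\Zz\To\Ss$ is a group homomorphism. Writing $\sigma(x)=p(x,0)$ and $s_k:=p(0,k)\in\Ss$, the decomposition $(x,k)=(x,0)+(0,k)$ yields $p(x,k)=\sigma(x)+s_k$, whence
$$\delta_k(x)=\delta(p(x,k))=\delta(\sigma(x)+s_k).$$
Thus, if I define $\beta\colon\Ss\To\R$ by $\beta(s)=\delta(s+s_k)$---which is continuous, being the composition of $\delta$ with the translation by $s_k$---then its base-leaf trace is exactly $\beta_0=\beta\circ\sigma=\delta_k$. In other words, $\delta_k$ is the $k=0$ displacement associated with the transversally translated function $\beta$.

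I would then simply invoke the earlier theorem with $\alpha=\beta$: it produces a continuous surjective homomorphism $\Ss\To\Hull(\delta_k)$ together with the identification $\Hull(\delta_k)\cong\Ss/\ker(K_\beta)$. Hence $\Hull(\delta_k)$ is a compact quotient group of $\Ss$, so $\delta_k$ is almost periodic; and by the remark, since $\T^n$ for $n\ge 2$ cannot occur as a quotient group of $\Ss$, the function $\delta_k$ cannot be quasi-periodic. Therefore $\delta_k$ is periodic or purely limit periodic, as claimed.

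I do not expect a deep obstacle: the only points requiring care are the homomorphism identity $p(x,k)=\sigma(x)+s_k$ and the continuity of $\beta$. An alternative route avoids introducing $\beta$ altogether: the same computation shows $\delta_k=K(s_k)$ for the homomorphism $K\colon\Ss\To\Hull(\delta_0)$ of the previous theorem (using that $K(s)(x)=\delta(\sigma(x)+s)$, an identity that holds on the dense base leaf $\Ll_0$ and extends by continuity), so that $\delta_k\in\Hull(\delta_0)$; minimality of the translation flow on the hull of an almost periodic function then gives $\Hull(\delta_k)=\Hull(\delta_0)$, and the conclusion is immediate. Either way, the crux is recognizing that varying $k$ merely shifts the base point along the transversal, leaving the hull---and hence the Bohr type of the displacement---unchanged.
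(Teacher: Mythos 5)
Your proof is correct, and its core is the same as the paper's: realize the displacement function as the trace on the base leaf of a continuous function on $\Ss$, then invoke the theorem that $\Hull(\alpha_0)$ is a quotient group of $\Ss$ together with the remark excluding $\T^n$, $n\geq 2$. The difference lies in how general $k\in\Zz$ is handled, and your handling is tighter than the paper's. The paper first reduces to $k=0$ via the relation $\delta_{k+n}(x)=\delta_k(x+n)$; but that relation only propagates the conclusion from a given $\delta_k$ to its integer translates $\delta_{k+n}$, $n\in\Z$, so proving the statement for $\delta_0$ directly covers only $k\in\Z$, and reaching arbitrary $k\in\Zz$ requires an additional continuity/density argument (uniform continuity of $\overline{\delta}$ on the compact group $\Ss$, so that $k\goTo\delta_k$ is continuous into the uniform topology, plus closedness of the class of periodic-or-limit-periodic functions) which the paper leaves implicit. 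Your translation trick --- writing $p(x,k)=\sigma(x)+s_k$ with $s_k=p(0,k)$, which is legitimate since $p$ is the quotient homomorphism of $\R\times\Zz$ by the subgroup $\{(t,-t):t\in\Z\}$, so that $\delta_k$ is exactly the base-leaf trace of the continuous function $\beta(s)=\delta(s+s_k)$ --- applies the key theorem uniformly to every $k\in\Zz$ at once and needs no reduction at all, thereby closing that gap. Your alternative route is also sound: $\delta_k=K(s_k)\in\Hull(\delta_0)$, and the orbit of $\delta_k$ under the $\R$-action is the coset $\delta_k*\orb(\delta_0)$ of the dense subgroup $\orb(\delta_0)$ in the compact group $\Hull(\delta_0)$, hence dense, giving $\Hull(\delta_k)=\Hull(\delta_0)$ and the same conclusion. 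Either version is a complete proof.
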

 \begin{proof}
  We know that for all $k\in\Zz$ and arbitrary $n\in\Z$, $$\delta_{k+n}(x)=\delta_{k}(x+n).$$
  This implies That $\delta_{k}$ and $\delta_{k+n}$ are in the same orbit under the $\R-$action  in $C(\R)$. This means that if $\delta_{k}$ is limit periodic for some $k\in\Zz,$ then $\delta_{k+n}$ will be limit periodic for every $n\in\Z.$ Thus, it is enough  to prove that for some fixed $k\in\Zz$,  $\delta_{k}$ is limit periodic. We will prove this for $\delta_{0}.$\\

 If $\delta:\R\times\Zz\To\R$ is a bounded continuous function and $\Z$-invariant, then $\delta$  induces a continuous function $\overline{\delta}:\Ss\To\R$.
 Denote by  $\overline{\delta_{0}}$  the function obtained  in the proof of the last theorem. It is easy to see that $\overline{\delta_{0}}$ coincides with the function $\delta_{0}$ since we have the following commutative diagram
 \[\xymatrix{\R \ar@/^0.7cm/[rr]^{\delta_{0}}\ar[r]^{\sigma'} \ar[dr]^{\sigma} & \R\times\Zz\ar[d]^{P}\ar[r]^{\delta}& \R\\
   & \Ss\ar[ur]_{\overline{\delta}}&}\]
 Here $\sigma'(x)=(x,0),$ therefore
 \begin{eqnarray*}
 \delta_{0}&=&\delta\circ \sigma'\\
 &=&\overline{\delta}\circ P\circ \sigma'\\
 &=& \overline{\delta}\circ \sigma\\
 &=&\overline{\delta}_{0}.
 \end{eqnarray*}
  Since $\overline{\delta_{0}}$ is limit periodic, it follows that  $\delta_{0}$ is limit periodic. So then $\delta_{k}$ is limit periodic for every $k\in\Zz$.
 \end{proof}
Denote by $\Homeo_{+}^{p}(\Ss)\subset\Homeo_{+}(\Ss)$  the homeomorphisms having periodic displacement function and by $\Homeo_{+}^{lp}(\Ss)\subset\Homeo_{+}(\Ss)$  the homeomorphisms having purely limit periodic displacement function. The obvious theorem is the following.

\begin{theorem}
$\Homeo_{+}(\Ss)=\Homeo_{+}^{p}(\Ss)\cup\Homeo_{+}^{lp}(\Ss)$. Moreover, $\Homeo_{+}^{p}(\Ss)$ is a dense subgroup.
\end{theorem}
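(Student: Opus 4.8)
The plan is to treat the three assertions in turn, using Theorem~\ref{lp} for the union and reserving the real work for density.

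\emph{Union and subgroup property.} The union is immediate from Theorem~\ref{lp}: the displacement $\delta_{0}$ of any $f\in\Homeo_{+}(\Ss)$ is periodic or purely limit periodic, so $f$ lies in $\Homeo_{+}^{p}(\Ss)$ or in $\Homeo_{+}^{lp}(\Ss)$. For the subgroup property I would first upgrade periodicity of one fibre to all fibres: if $\delta_{0}$ has period $n$, then each integer fibre $\delta_{t}(x)=\delta_{0}(x+t)$ has period $n$, and since $k\mapsto\delta_{k}$ is continuous from $\Zz$ to $C(\R)$ while $\{h\in C(\R):h(x+n)=h(x)\}$ is closed, the dense image of $\Z$ forces every $\delta_{k}$ to have period $n$. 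Here one may always take $n\in\Z_{+}$: continuity of $k\mapsto\delta_{k}$ over the profinite group $\Zz$ rules out an irrational minimal period, so the (rational) period group meets $\Z_{+}$. Then the composition rule $\delta_{k}^{f\circ g}(x)=\delta_{k}^{g}(x)+\delta_{k}^{f}\bigl(x+\delta_{k}^{g}(x)\bigr)$ shows that if $\delta^{f}$ and $\delta^{g}$ have periods $n$ and $m$ then $\delta^{f\circ g}$ has period $\mathrm{lcm}(n,m)$; and since periodicity of $\delta_{k}$ means $F_{k}$ commutes with $x\mapsto x+n$, so does $F_{k}^{-1}$, whence $f^{-1}$ again has $n$-periodic displacement. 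Thus $\Homeo_{+}^{p}(\Ss)$ is a subgroup.

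\emph{Density, and the main obstacle.} The natural idea is to approximate the continuous displacement $\delta\colon\Ss\To\R$ of a given $f$ uniformly by one that factors through a finite level $\R/n\Z$. The pullbacks under the quotient homomorphisms $\rho_{n}\colon\Ss\To\R/n\Z$, with $\rho_{n}\circ\sigma(x)=[x]_{n}$, are uniformly dense in $C(\Ss)$ because the $\rho_{n}$ separate points, identifying $\Ss\cong\lim_{\overleftarrow{n}}\R/n\Z$. The obstacle is that monotonicity of a homeomorphism of $\R$ is \emph{not} controlled by the sup-norm, so a close uniform approximation of $\delta$ may well make $\id+\delta$ fail to be fibrewise increasing, i.e.\ fail to be a homeomorphism. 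I expect this to be the only real difficulty.

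I would resolve it by committing to a monotonicity-preserving approximation, namely the conditional expectation $$E_{n}\delta(s)=\int_{\ker\rho_{n}}\delta(s+h)\,dh$$ averaging over the compact subgroup $\ker\rho_{n}$ against normalized Haar measure. Being constant on cosets of $\ker\rho_{n}$, the function $E_{n}\delta$ factors through $\R/n\Z$, so its displacement is $n$-periodic and $\id+E_{n}\delta\in\Homeo_{+}^{p}(\Ss)$. Crucially, monotonicity survives: on each leaf $x\mapsto x+\delta(\sigma(x)+h)$ is an increasing homeomorphism for every fixed $h$, since it describes the action of $f$ along the leaf through $h$, so the average $x\mapsto x+E_{n}\delta(\sigma(x))$ is again strictly increasing. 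Finally the subgroups $\ker\rho_{n}$ are nested along divisibility with trivial intersection, so their normalized Haar measures form an approximate identity and $E_{n}\delta\To\delta$ uniformly by uniform continuity of $\delta$ on the compact group $\Ss$. Hence every $f$ is a uniform limit of periodic-displacement homeomorphisms, giving density. The crux is precisely this replacement of an arbitrary uniform approximation by Haar averaging, which reconciles approximation with the homeomorphism constraint.
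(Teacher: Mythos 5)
Your proposal is correct, and it follows the same high-level strategy as the paper --- the union is read off from Theorem~\ref{lp}, and density is obtained by approximating displacements by periodic ones --- but the paper's own proof is only a two-sentence assertion, and you supply precisely the content it omits. The paper says that $\Homeo_{+}^{lp}(\Ss)$ is closed and that each of its elements ``can be approximated by functions in $\Homeo_{+}^{p}(\Ss)$,'' which is little more than the definition of limit periodicity applied to the displacement; it never addresses the obstacle you isolate, namely that uniform approximation of the displacement is not monotonicity-preserving, so an arbitrary periodic approximant of $\delta$ need not define a homeomorphism of $\Ss$. Your conditional expectation $E_{n}\delta(s)=\int_{\ker\rho_{n}}\delta(s+h)\,dh$ is exactly the right repair: constancy on cosets of $\ker\rho_{n}$ gives the factorization through $\R/n\Z$ and hence $n$-periodic displacement, averaging against a probability measure preserves strict monotonicity along leaves, and the nested kernels with trivial intersection give uniform convergence by uniform continuity of $\delta$; as a bonus, your argument approximates \emph{every} $f\in\Homeo_{+}(\Ss)$ directly, not only the purely limit periodic ones. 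Likewise, the subgroup assertion is nowhere argued in the paper, while your composition rule $\delta^{f\circ g}(x)=\delta^{g}(x)+\delta^{f}\bigl(x+\delta^{g}(x)\bigr)$, the $\mathrm{lcm}$ bound, and the inverse argument via commutation of $F_{k}$ with $x\mapsto x+n$ settle it. Two small points deserve tightening: (i) the claim that continuity of $k\mapsto\delta_{k}$ on $\Zz$ excludes an irrational minimal period $T$ should be spelled out (given integers $n_{j}\to 0$ in $\Zz$, one may take $n_{j}$ to be a multiple of $j!$ chosen so that $n_{j}$ stays far from $0$ modulo $T$, contradicting continuity at $0$; alternatively, note that $\Hull(\delta_{0})\cong\Ss/\ker(K)$ and every continuous surjection $\Ss\To\R/T\Z$ restricts to $\sigma(\R)$ with rational period); (ii) you should remark that $g_{n}:=\id+\sigma\circ E_{n}\delta$ is isotopic to the identity, which follows from the straight-line isotopy $t\mapsto\id+\sigma\circ(tE_{n}\delta)$, fibrewise increasing by the same averaging argument. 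Neither is a gap in substance; your proof is a completion of the paper's sketch rather than a departure from it.
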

\begin{proof}
$\Homeo_{+}^{lp}(\Ss)$ is closed since it contains its limit points. Moreover each function in  $\Homeo_{+}^{lp}(\Ss)$ can be aproximated by functions in $\Homeo_{+}^{p}(\Ss)$, and therefore $\Homeo_{+}^{p}(\Ss)$ is dense.
\end{proof}
 \begin{theorem}\label{Ap}
  Let  $F\in\HHomeo_{+}(\Ss)$ be given by  $F(x,k)=(x+\delta_{k}(x),k).$ If $D_{F}:\Zz\to C(\R)$ is injective, then  $D_{F}(k):=\delta_{k}$ is limit periodic for all $k\in\Zz.$
 \end{theorem}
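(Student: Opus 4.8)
The plan is to recognize the displacement function $D_F$ as the restriction of the homomorphism $K$ to the Cantor fiber of $\Ss$, and then exploit the topological fact that a circle admits no injective continuous image of $\Zz$. I read the conclusion as the \emph{purely} limit periodic (non-periodic) case: Theorem \ref{lp} already gives limit periodicity of every $\delta_k$ unconditionally, so the hypothesis that $D_F$ be injective can only do work by excluding the periodic alternative, and that is what I would aim to show.

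First I would invoke the homomorphism $K\colon\Ss\To\Hull(\delta_0)$ produced by the theorem above (applied with $\alpha=\delta$, so that $\alpha_0=\delta\circ\sigma=\delta_0$), which satisfies $K(\sigma(t))=\delta_0^{\,t}$ for $t\in\R$. Let $\iota\colon\Zz\To\Ss$, $k\goTo[(0,k)]$, be the inclusion of the fiber subgroup $\Zz=\ker\pi_1$; it is a continuous injective homomorphism. For an integer $n$ the $\Z$-action identifies $(0,n)\sim(n,0)$, so $\iota(n)=\sigma(n)$ and hence $K(\iota(n))=\delta_0^{\,n}=\delta_n=D_F(n)$. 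Thus $K\circ\iota$ and $D_F$ are two continuous maps $\Zz\To C(\R)$ (continuity of $D_F$ being the Lemma) that agree on the dense subgroup $\Z$, so they coincide: $D_F=K\circ\iota$. In particular $D_F$ takes values in $\Hull(\delta_0)$ and is a continuous homomorphism of $\Zz$ into the compact group $\Hull(\delta_0)$.

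Now suppose $D_F$ is injective. Then $K\circ\iota$ is an injective continuous homomorphism from the compact group $\Zz$ into $\Hull(\delta_0)$; since an injective continuous map from a compact space to a Hausdorff space is an embedding, $\Hull(\delta_0)$ contains a closed subgroup homeomorphic to the Cantor group $\Zz$. Suppose, for contradiction, that $\delta_0$ is periodic. If $\delta_0$ were constant then $D_F$ would be constant, contradicting injectivity; so $\delta_0$ is non-constant periodic, its $\R$-orbit is the compact image of a circle, and therefore $\Hull(\delta_0)\cong\UC$. But $\UC$ has no infinite totally disconnected compact subgroup (a compact subgroup of $\UC$ is finite or all of $\UC$, and neither is homeomorphic to $\Zz$). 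This is the contradiction, so $\delta_0$ is not periodic, and by Theorem \ref{lp} it is then purely limit periodic.

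Finally, to pass from $k=0$ to all $k$, I would note $\delta_k=K(\iota(k))\in\Hull(\delta_0)$ and that the translation $\R$-flow on the compact abelian group $\Hull(\delta_0)$ is minimal, its orbit of $\delta_0$ being the dense $1$-parameter subgroup $\{\delta_0^{\,t}:t\in\R\}$; hence $\Hull(\delta_k)=\Hull(\delta_0)$ for every $k$, so each $\delta_k$ has the same proper solenoidal hull and is purely limit periodic. The main obstacle is the identification $D_F=K\circ\iota$, which hinges on checking $\iota(n)=\sigma(n)$ so that the two continuous maps agree on the dense integer subgroup, together with the topological input separating the Cantor group $\Zz$ from the circle $\UC$; once these are in place the conclusion is immediate.
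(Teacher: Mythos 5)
Your proof is correct, but it takes a genuinely different route from the paper's. The paper's argument is a two-line direct computation: injectivity of $D_F$ gives $\delta_k\neq\delta_{k+n}$ for every nonzero $n\in\Z$, and the equivariance $\delta_{k+n}=\delta_k(\cdot+n)$ then shows $\delta_k$ admits no integer period, so by the dichotomy of Theorem \ref{lp} it is purely limit periodic; this is done for each $k$ separately, with no transfer step. You instead argue structurally: you identify $D_F$ with $K\circ\iota$ on the Cantor fiber (checking $\iota(n)=\sigma(n)$ so the two continuous maps agree on the dense copy of $\Z$), deduce that an injective $D_F$ embeds $\Zz$ as an infinite compact totally disconnected subgroup of $\Hull(\delta_0)$, and kill the periodic alternative because the proper closed subgroups of a circle are finite; minimality of the translation flow on the hull then carries the conclusion from $k=0$ to all $k$. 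What your route buys: it excludes \emph{all} periods, including irrational ones (any non-constant periodic $\delta_0$ has circle hull), whereas the paper's argument literally excludes only integer periods, and it also repairs a small slip in the paper's proof, which asserts $\delta_k(x)\neq\delta_{k+n}(x)$ for \emph{every} $x$ when injectivity only yields this for some $x$ (harmless, since the functional inequality is all that is needed). What the paper's route buys: brevity and independence from the hull classification, the homomorphism $K$, and the minimality argument, none of which are needed for its pointwise equivariance trick.
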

 \begin{proof}
 If $D_{F}$ is injective then $\delta_{k}\neq \delta_{k+n}$ for all $n\in\Z.$ By the $\Z$-invariance of $\delta$, we know that for every $x\in\R$
 $$\delta_{k}(x)\neq \delta_{k+n}(x)=\delta_{k}(x+n).$$
 We conclude that  $\delta_{k}$ cannot be periodic, so must be limit periodic.
 \end{proof}

We will give a charactization of $\Homeo_{+}^{p}(\Ss)$ in  Section \ref{Inducedhomeos}.

\section{The semi-conjugation theorem}\label{semi-conjugation}
In this seccion  the dynamics generated by $f\in\Homeo_{+}(\Ss)$ will be compared with the dynamics generated by a homeomorphism of a qoutient group of $\Ss$.\\

 Given  $f\in\homeo_{+}(\Ss)$, the displacement function at the level 0, $\delta_{0}$, satisfies $$\Hull(\delta_{0})\cong \Ss/\ker(K) ,$$
where $\ker(K)$ is the kernel of a specific homomorphism $K$. Now we would like to give a homeomorphism isotopic to the identity   $g:\Hull(\delta_{0})\To \Hull(\delta_{0})$ such that is semi-conjugated to $f$ by $K$ i.e. the following diagram commutes
 	\[\xymatrix{\Ss \ar[r]^{f}\ar[d]^{K}& \Ss\ar[d]^{K} \\
 		\Hull(\delta_{0})\ar[r]^{g}& \Hull(\delta_{0})}\]

First, given a limit periodic function $\delta:\R\To\R$,  suppose that $F:\R\To\R$ defined as $x\goTo x+\delta(x)$ is an increasing homeomorphism, define   $g:\Hull(\delta)\To\Hull(\delta)$  by $$\gamma\goTo\gamma*\delta^{\gamma(0)}$$
where ``$*$'' denotes the product defined on $\Hull(\delta)$ in the section \ref{displacements}.
\begin{lemma}
$g$ defines a homeomorphism isotopic to the identity.
\end{lemma}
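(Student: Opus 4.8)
The plan is to first reduce the definition of $g$ to a transparent closed form, and then prove separately that $g$ is bijective and that it sits inside a continuous family joining it to the identity. Writing $\iota\colon\R\To\Hull(\delta)$, $t\goTo\delta^{t}$, for the dense translation flow, the very definition of the product $*$ makes multiplication by $\iota(s)=\delta^{s}$ coincide with translation: for $\gamma=\lim_n\delta^{t_n}$ one has $\gamma*\delta^{s}=\lim_n\delta^{t_n+s}=\gamma^{s}$, where $\gamma^{s}(x)=\gamma(x+s)$. Hence the map in the statement is
\[ g(\gamma)=\gamma*\delta^{\gamma(0)}=\gamma^{\gamma(0)}, \]
i.e. $g$ translates each $\gamma$ by its own value at $0$. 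Continuity of $g$ is then immediate: evaluation $\gamma\mapsto\gamma(0)$ is continuous for the compact-open topology, the translation action $(\gamma,s)\mapsto\gamma^{s}$ is jointly continuous, and $g$ is their composition. Evaluating on the orbit gives $g(\delta^{t})=\delta^{\,t+\delta(t)}=\delta^{F(t)}$, so $g\circ\iota=\iota\circ F$; since $\iota(\R)$ is dense this already determines $g$ and records that $g$ is conjugate, through the flow, to the increasing homeomorphism $F$.

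The key step — and the one I expect to be the main obstacle — is the claim that for every $\gamma\in\Hull(\delta)$ the map $\id+\gamma\colon\R\To\R$ is again an orientation-preserving homeomorphism. Each translate $F^{t}=\id+\delta^{t}=F(\cdot+t)-t$ is a homeomorphism, and $\id+\gamma=\lim_n F^{t_n}$ is a pointwise (hence, by monotonicity and continuity, locally uniform) limit of homeomorphisms; the difficulty is that such a limit is a priori only non-decreasing, and one must exclude flat segments, equivalently slope $-1$ segments of $\gamma$. This is exactly where the limit-periodicity of $\delta$ must be used quantitatively: the translation flow on the compact group $\Hull(\delta)$ is minimal, so a flat segment of some limit $\id+\gamma$ could be transported back near the orbit of $\delta$ itself, contradicting the strict monotonicity of $F$. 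Equivalently, one shows that $\epsilon:=F^{-1}-\id$ is again almost periodic — using that $(F^{-1})^{t}=(F^{t})^{-1}$ together with Bochner's double-sequence criterion — so that the inverses $(F^{t_n})^{-1}=\id+\epsilon^{t_n}$ subconverge to some $\id+\tilde\gamma$, and passing to the limit in $F^{t_n}\circ(F^{t_n})^{-1}=\id$ exhibits $\id+\tilde\gamma$ as a genuine inverse of $\id+\gamma$.

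Granting the claim, bijectivity of $g$ is a short computation. For $\eta\in\Hull(\delta)$ let $b(\eta)$ be the unique zero of the homeomorphism $\id+\eta$, and set $h(\eta)=\eta^{\,b(\eta)}$; since $\eta^{s}\in\Hull(\delta)$ for every $s\in\R$, $h$ maps $\Hull(\delta)$ into itself, and $b(\cdot)$ is continuous because $b(\eta)=-\eta(b(\eta))$ stays bounded by $\|\delta\|_{\infty}$ and the zero is unique (an Arzel\`a--Ascoli/uniqueness argument). A direct check gives $g(h(\eta))=\eta^{\,b(\eta)+\eta(b(\eta))}=\eta$ and, writing $a=\gamma(0)$, $h(g(\gamma))=h(\gamma^{a})=\gamma^{\,a+b(\gamma^{a})}=\gamma$, because $(\id+\gamma^{a})(-a)=-a+\gamma(0)=0$ forces $b(\gamma^{a})=-a$. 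Thus $h=g^{-1}$, and $g$ is a homeomorphism of the compact group $\Hull(\delta)$.

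Finally, to see that $g$ is isotopic to the identity I would run the same construction for the interpolating family $F_{u}=\id+u\delta=(1-u)\,\id+uF$, $u\in[0,1]$. Each $F_{u}$ is a convex combination of the strictly increasing homeomorphisms $\id$ and $F$, hence again an increasing homeomorphism with bounded displacement, and $\id+u\eta=(1-u)\,\id+u(\id+\eta)$ is a homeomorphism for every $\eta\in\Hull(\delta)$ by the claim. Therefore the maps $g_{u}(\gamma)=\gamma^{\,u\gamma(0)}$ are homeomorphisms of $\Hull(\delta)$ by the argument above, they depend continuously on $(u,\gamma)$, and $g_{0}=\id$, $g_{1}=g$. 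This family is the required isotopy, completing the proof.
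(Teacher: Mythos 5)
Your reduction $g(\gamma)=\gamma*\delta^{\gamma(0)}=\gamma^{\gamma(0)}$, the continuity argument, the explicit inverse $h(\eta)=\eta^{b(\eta)}$ built from the unique zero of $\id+\eta$, and the isotopy $g_{u}(\gamma)=\gamma^{u\gamma(0)}$ are all correct \emph{conditionally} on your key claim, and in that conditional form your scheme is tighter than the paper's own proof, which verifies injectivity only on the dense orbit (points $\delta^{a},\delta^{b}$) and then asserts that the argument ``extends by limits'', and which never checks that each time-$c$ map of its isotopy is itself a homeomorphism. The problem is the key claim itself --- that $\id+\gamma$ is strictly increasing, with no flat segments, for \emph{every} $\gamma\in\Hull(\delta)$ --- and neither of your two sketches proves it. The minimality sketch yields no contradiction: transporting a flat segment of $\id+\gamma$ back near the base point of the hull only produces points $x_{n}$ with $F(x_{n}+\eps)-F(x_{n})\To 0$, which is perfectly compatible with $F$ being strictly increasing. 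The Bochner sketch is circular: almost periodicity of $F^{-1}-\id$ forces $F^{-1}$ to be uniformly continuous, and uniform continuity of $F^{-1}$ is \emph{equivalent} to $\inf_{x}\paren{F(x+\eps)-F(x)}>0$ for every $\eps>0$, i.e.\ to the very absence of flat segments in the hull that you are trying to establish.

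Worse, under the hypotheses as stated (an abstract limit periodic $\delta$ with $F=\id+\delta$ an increasing homeomorphism) the claim is false, so no soft argument can close the gap. Take $\delta=\sum_{k\geq 1}\delta_{k}$ with $\delta_{k}$ piecewise linear, $2^{k}$-periodic, of slope $-2^{-k}$ on a unit interval $J_{k}=[t_{k},t_{k}+1]$ and of small positive slope elsewhere, where $t_{k+1}\equiv t_{k}\pmod{2^{k}}$ and the integers $t_{k}$ converge in $\Zz$ to a point outside $\Z$. Then no real number lies in all the sets $J_{k}+2^{k}\Z$, which makes $F$ strictly increasing; but $F(t_{N}+u)-F(t_{N})\leq 2^{1-N}u$ for $u\in[0,1]$, so any limit $\gamma$ of the translates $\delta^{t_{N}}$ has $\id+\gamma$ constant on $[0,1]$. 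For such $\gamma$ and $0<u<1$ one gets $g(\gamma^{u})=\gamma^{u+\gamma(u)}=\gamma^{\gamma(0)}=g(\gamma)$ while $\gamma^{u}\neq\gamma$ (the translation flow on the solenoidal hull is free because $\delta$ is not periodic), so $g$ is not injective and the statement fails in this generality. The missing ingredient is the provenance of $\delta$ in the paper's application: there $\delta=\delta_{0}$ is the displacement function of an actual $f\in\Homeo_{+}(\Ss)$, so every hull element $\id+\gamma$ is a reparametrized restriction of the homeomorphism $f$ to a leaf and is therefore automatically injective. If you add that hypothesis and invoke it where your key claim is used, your construction of $h$ and of the isotopy goes through verbatim; without it, your proof (like the paper's) has a genuine hole at exactly this point.
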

\begin{proof}
We notice that $g$ is a continuous function because $*$ and the ``valuation function'' are continuous. Since $\Hull(\delta)$ is compact, it is enough to prove that g is a bijective. \\

This function is onto since it is onto on $\orb_{\R}(\delta)$ which is a dense set in $\Hull(\delta)$ and $g$ is continuous.\\

To prove that $g$ is one to one, we can take $\alpha,\;\beta\in\Hull(\delta)$ such that $\alpha=\delta^{a}$  and $\beta=\delta^{b}$ and suppose that $g(\beta)=g(\alpha)$ then
 $$\alpha*\delta^{\alpha(0)}=\beta*\beta^{\beta(0)}.$$
 By definition $\alpha*\delta^{\alpha(0)}=\delta^{a+\delta(a)}$ and $\beta*\delta^{\beta(0)}=\delta^{b+\delta(b)}$. Therefore
 $$\delta(x+a+\delta(a))=\delta(x+b+\delta(b))$$
For all $x\in\R$. Using $x=-\delta(a)$ it satisfies
$$\delta(a)=\delta(b+\delta(b)-\delta(a)).$$ 

But 

\begin{eqnarray*}
F(b+\delta(b)-\delta(a))&=&b+\delta(b)-\delta(a)+\delta(b+\delta(b)-\delta(a))\\
                        &=&b+\delta(b)-\delta(a)+\delta(a)\\
                        &=&b+\delta(b)\\
                        &=&F(b).
                       \end{eqnarray*}
Since $F$ is  an increasing homeomorphism $b=b+\delta(b)-\delta(a)$ and therefore $\delta(b)=\delta(a)$. This implies
$$(\beta)^{-1}*\alpha=\beta^{\beta(0)}*(\delta^{\alpha(0)})^{-1}=\delta^{\delta(b)-\delta(a)}=\delta^{0},$$

and $\alpha=\beta$. We can extend this arguments by limits to all $\Hull(\delta)$ to prove the injectivity of $g$.
Finally,  if
$G:[0,1]\times\Hull(\delta)\To\Hull(\delta)$ is defined  by
$$(c,\gamma)\goTo\gamma*\delta^{c\gamma(0)},$$
then $G$ is an isotopy from $g$ to the identity.
\end{proof}
The  next theorem follows from the argument above.
\begin{theorem}
If $f\in\homeo_{+}(\Ss)$ and  $\delta_{0}$ is the displacement function at the level 0, then $g:\Hull(\delta_{0})\To \Hull(\delta_{0})$  is semi-conjugated to $f$ by $K$.
\end{theorem}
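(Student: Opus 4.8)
The plan is to verify the commutativity $K\circ f = g\circ K$ by restricting to the base leaf $\Ll_0=\sigma(\R)$ and then invoking density. Since $\Ll_0$ is dense in $\Ss$, the maps $K$ and $f$ are continuous, and $g$ is continuous by the preceding lemma, both compositions $K\circ f$ and $g\circ K$ are continuous maps $\Ss\To\Hull(\delta_0)$; moreover $\Hull(\delta_0)$, being a compact topological group, is Hausdorff. Hence it suffices to prove $K(f(\sigma(t)))=g(K(\sigma(t)))$ for every $t\in\R$, because two continuous maps into a Hausdorff space that agree on a dense subset coincide.

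First I would record that $f$ preserves the base leaf. The lift $F$ has the form $F(x,k)=(F_k(x),k)=(x+\delta_k(x),k)$, which fixes the $\Zz$-coordinate; thus $F(\R\times\{0\})=\R\times\{0\}$, and consequently $f(\sigma(t))=\sigma(F_0(t))=\sigma(t+\delta_0(t))$ for all $t\in\R$. This identifies the image of a base-leaf point under $f$ again as a base-leaf point, where the action of $K$ is explicit.

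Next I compute both sides on $\sigma(t)$. For the left side, using the defining property $K(\sigma(s))=\delta_0^{\,s}$ of the homomorphic extension $K$ (taking $\alpha=\overline{\delta_0}$, so that $\alpha_0=\delta_0$), I get
$$K(f(\sigma(t))) = K(\sigma(t+\delta_0(t))) = \delta_0^{\,t+\delta_0(t)}.$$
For the right side, since $\delta_0^{\,t}(0)=\delta_0(t)$ by definition of the $\R$-action and the product $*$ satisfies $\delta_0^{\,a}*\delta_0^{\,b}=\delta_0^{\,a+b}$ on the orbit, I obtain
$$g(K(\sigma(t))) = g(\delta_0^{\,t}) = \delta_0^{\,t}*\delta_0^{\,\delta_0^{\,t}(0)} = \delta_0^{\,t}*\delta_0^{\,\delta_0(t)} = \delta_0^{\,t+\delta_0(t)}.$$
The two expressions coincide, which establishes the identity on $\Ll_0$ and hence, by the density argument, on all of $\Ss$.

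The computation itself is short; the points requiring care are exactly the ones I would state explicitly at the outset. The main (mild) obstacle is the passage from the base leaf to all of $\Ss$: one must be sure that the product $*$ and the evaluation $\gamma\mapsto\gamma(0)$, hence $g$, are genuinely continuous on $\Hull(\delta_0)$ (supplied by the lemma) and that $K$ is the continuous homomorphic extension furnished by the earlier theorem, so that both compositions are continuous and the density of $\Ll_0$ forces global equality. No structural input beyond these facts is needed.
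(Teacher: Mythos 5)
Your proposal is correct and follows exactly the route the paper intends: the paper's ``proof'' is a one-line deferral to ``the argument above'' (the construction of $K$ and the lemma that $g$ is a homeomorphism isotopic to the identity), and your computation $K(f(\sigma(t)))=\delta_0^{\,t+\delta_0(t)}=g(K(\sigma(t)))$ on the dense base leaf, extended by continuity and Hausdorffness, is precisely the verification being left implicit. In fact your writeup supplies details the paper omits, so it is a fleshed-out version of the same argument rather than a different one.
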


\section{Induced homeomorphisms and its dynamics}\label{Inducedhomeos}
\subsection{Induced homeomorphisms}
 Given $f_{1}\in\Homeo_{+}(\UC),$ we can extend it to a homeomorphism $f\in\Homeo_{+}(\Ss)$ which will be our first example for the dynamics generated by a homeomorphism in $\Homeo_{+}(\Ss)$ and it´s relationship with the rotation set.\\

  Given $n\geq 2$, define $f_{n}:\R/n\Z\To\R/n\Z$ extending $f_{1}$ in the following way. Choose a lifting $F_{1}:\R\To\R$ which is a $\Z$-equivariant homeomorphism, and thus $n\Z$-equivariant,  and  project it to a homeomorphism  $f_{n}:\R/n\Z\to \R/n\Z.$ Note that these homeomorphisms satisfy the compatibility condition, i.e. if  $n|m$, then  the following  diagram commutes.

\[\xymatrix{\R/m\Z \ar[r]^{p_{nm}}\ar[d]^{f_{m}}& \R/n\Z\ar[d]^{f_{n}} \\
		\R/m\Z \ar[r]^{p_{nm}}& \R/n\Z}\]

 Therefore there is  a well defined  homeomorphism:

$$f:\Ss\To\Ss,$$

which covers $f_{1}$ in the sense that the following diagram commutes
\[\xymatrix{\Ss \ar[r]^{f}\ar[d]^{\pi_{1}}& \Ss\ar[d]^{\pi} \\
		\R/\Z \ar[r]^{f_{1}}& \R/\Z.}\]
We will call these kinds of homeomorphisms   \emph{induced homeomorphisms of degree 1}, and the  subspace of all these homeomorphisms will be denoted by $\Homeo_{I_{1}}(\Ss)$.\\

 We can write  each $f_{n}$  as $\id+\delta_{n},$ where $\delta_{n}:\R/n\Z\To\R/n\Z$ can be identified with a $n\Z$-invariant function $\delta_{n}:\R/n\Z\To\R$. In this case, each $\delta_{n}$ is  $\Z$-invariant  and bounded by 1, therefore $f$ can be written  as
  \begin{eqnarray*}
  f((x_{1},x_{2},\dots,x_{n},\dots))&=&(x_{1},x_{2},\dots,x_{n},\dots)\\
  &+&(\delta_{1}(x_{1}),\delta_{2}(x_{2}),\dots,\delta_{n}(x_{n}),\dots).
  \end{eqnarray*}
It is clear from the definitions that if $$f:=\id+\overline{\delta} \in\Homeo_{I_{1}}(\Ss)$$ is induced by  $$f_{1}:=\id+\delta_{1}\in\Homeo_{+}(\UC),$$ then the following diagram commutes.
\[\xymatrix{\R \ar@/^0.7cm/[rr]^{\delta_{0}}\ar[r]^{\sigma'} \ar[dr]^{\sigma} \ar[ddr]_{\pi}& \R\times\Zz\ar[d]^{P}\ar[r]^{\delta}& \R\\
	& \Ss\ar[ur]^{\overline{\delta}}\ar[d]^{\pi_{1}}&\\
	&\UC\ar[uur]_{\delta_{1}}&
}\]


Note that the displacement function from $f$ is determined by
$$\overline{\delta}((x_1,x_2,\dots,x_{n},\dots))=(\delta_{1}(x_{1}),\delta_{2}(x_{2}),\dots,\delta_{n}(x_{n}),\dots).$$
It follows that $\overline{\delta}(x)\in\Ll_{0}$ for all $x\in\Ss,$ and we have proved the following theorem.

\begin{theorem}$
\Homeo_{I_{1}}(\Ss)\subset\Homeo_{+}(\Ss).$
\end{theorem}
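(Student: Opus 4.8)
The plan is to prove membership in $\Homeo_{+}(\Ss)$ by exhibiting, for an arbitrary degree-$1$ induced $f$, a lift $F:\R\times\Zz\To\R\times\Zz$ of exactly the form characterized in Section~\ref{solenoid}: namely $F(x,k)=(F_{k}(x),k)$ with each $F_{k}=\id+\delta_{k}$ an increasing homeomorphism of $\R$, each $\delta_{k}$ bounded and continuous, and $F$ equivariant under the integral translations $T_{t}(x,k)=(x+t,k-t)$. The simplification I expect to carry the argument is that for a degree-$1$ induced homeomorphism the displacement does not depend on the transverse coordinate $k$. Concretely, I would fix the increasing $\Z$-equivariant lift $F_{1}:\R\To\R$ of $f_{1}$, set $\delta_{1}:=F_{1}-\id$ (which is $1$-periodic since $F_{1}(x+1)=F_{1}(x)+1$, hence bounded and continuous), and define $F(x,k):=(x+\delta_{1}(x),k)$.

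First I would record the routine checks. The map $F$ is $F_{1}\times\id_{\Zz}$ in the product coordinates, so it is a homeomorphism of $\R\times\Zz$ with inverse $(y,k)\mapsto(F_{1}^{-1}(y),k)$. Then I would verify $F\circ T_{t}=T_{t}\circ F$: this is precisely where $1$-periodicity enters, since $F(T_{t}(x,k))=(x+t+\delta_{1}(x+t),k-t)=(x+t+\delta_{1}(x),k-t)=T_{t}(F(x,k))$, which is the equivariance relation $\delta_{k-t}(x+t)=\delta_{k}(x)$ with the constant choice $\delta_{k}\equiv\delta_{1}$. Hence $F$ descends to a homeomorphism $\overline{F}$ of $\Ss=\R\times_{\Z}\Zz$, and since $F_{k}=\id+\delta_{1}$ is increasing (as $F_{1}$ is an increasing homeomorphism) with $\delta_{k}=\delta_{1}$ bounded and continuous, $\overline{F}$ has the defining form of an element of $\HHomeo_{+}(\Ss)$, so $\overline{F}\in\Homeo_{+}(\Ss)$.

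The hard part will be identifying the descended map $\overline{F}$ with the map $f$ that was built coordinatewise as the inverse limit of the $f_{n}$. This requires reconciling the two models of the solenoid, $\R\times_{\Z}\Zz$ and $\lim_{\overleftarrow{n}}\R/n\Z$: under the canonical isomorphism the class of $(x,k)$ has $n$-th coordinate $x+\widetilde{k_{n}}\bmod n\Z$, where $k_{n}=p_{n}(k)$ and $\widetilde{k_{n}}\in\Z$ is a chosen lift. I would then compute $\pi_{n}(\overline{F}[(x,k)])=x+\delta_{1}(x)+\widetilde{k_{n}}\bmod n\Z$ and compare it with $f_{n}(x+\widetilde{k_{n}}\bmod n\Z)=(x+\widetilde{k_{n}})+\delta_{1}(x+\widetilde{k_{n}})\bmod n\Z$; these agree because $f_{n}$ was obtained by projecting the same $F_{1}$ (legitimate since $F_{1}$ is $n\Z$-equivariant for every $n$), and because $\delta_{1}(x+\widetilde{k_{n}})=\delta_{1}(x)$ by $1$-periodicity. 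Matching all coordinates $n$ shows $\overline{F}=f$, completing the identification.

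Finally, orientation preservation and isotopy to the identity come for free from this lift. Since $F$ fixes the $\Zz$-coordinate and acts along each leaf by the increasing map $F_{1}$, it preserves the orientation of $\Ss$; and the straight-line family $F^{s}(x,k):=(x+s\,\delta_{1}(x),k)$, $s\in[0,1]$, is an isotopy from $\id$ to $F$. Each $F^{s}$ is again $\Z$-equivariant (the same periodicity computation) and leafwise increasing, as one checks for $x<y$ that $(\id+s\delta_{1})(y)-(\id+s\delta_{1})(x)=(y-x)+s(\delta_{1}(y)-\delta_{1}(x))>0$ using that $F_{1}$ is increasing and $0\le s\le1$, so the isotopy stays inside $\Homeo_{+}(\Ss)$. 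Combining the identification $\overline{F}=f$ with these two properties yields $f\in\Homeo_{+}(\Ss)$, and since $f$ was arbitrary this proves $\Homeo_{I_{1}}(\Ss)\subset\Homeo_{+}(\Ss)$.
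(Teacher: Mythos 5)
Your proof is correct, and while it rests on the same key observation as the paper --- that for a degree-$1$ induced homeomorphism the displacement is the $1$-periodic function $\delta_{1}$, independent of the transverse coordinate --- it executes a genuinely different and more self-contained route. The paper never writes a proof environment for this theorem: it stays in the inverse-limit model, writes $f$ coordinatewise as $\id+\overline{\delta}$ with $\overline{\delta}((x_{n})_{n})=(\delta_{n}(x_{n}))_{n}$, observes that $\overline{\delta}(x)$ lies in the base leaf $\Ll_{0}$ for every $x$, and concludes by matching this against the normal form $f=\id+i\circ\delta$ of Section \ref{solenoid} (the description quoted from Kwapisz). You instead work in the covering model $\R\times_{\Z}\Zz$: you construct the explicit lift $F(x,k)=(x+\delta_{1}(x),k)$, verify $T_{t}$-equivariance from $1$-periodicity, reconcile the two models of $\Ss$ (your formula, that the $n$-th coordinate of the class of $(x,k)$ is $x+\widetilde{k_{n}} \bmod n\Z$, is the correct one, and it is the only delicate bookkeeping in the argument), and then build the straight-line isotopy $(x,k)\mapsto(x+s\,\delta_{1}(x),k)$, noting that $\id+s\delta_{1}=(1-s)\id+sF_{1}$ is an increasing homeomorphism for each $s\in[0,1]$. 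What your route buys is logical completeness: the paper's Section \ref{solenoid} description is stated as the form that lifts of elements of $\Homeo_{+}(\Ss)$ must take, i.e.\ as a necessary condition, so using it as a sufficient criterion for membership in the isotopy component is a step the paper leaves implicit, and your explicit isotopy closes exactly that step. What the paper's route buys is brevity and a coordinatewise formula for $\overline{\delta}$ that is reused later (e.g.\ in identifying $\Homeo_{I}(\Ss)$ with $\Homeo_{+}^{p}(\Ss)$).
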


\begin{remark}
  This homeomorphism is unique modulo integer translations in the base leaf due to the dependence of the choice of the lifting to $\R$ when we induce the homeomorphisms $f_{n},$ i.e. we have the  exact sequence

  \[\xymatrix{0\ar[r]&R_{\alpha}\ar[r]& \Homeo_{I_{1}}(\Ss)\ar[r]&\Homeo_{+}(\UC)\ar[r]& 1}\]
where $R_{\alpha}:=\{r_{\alpha}:\Ss\to\Ss|r_{\alpha}(s)=s+\alpha,\; \alpha\in i(\Z)\subset\Ss\}\cong\Z.$
\end{remark}
The universal central extension (see \cite{Ghys})
$$0\To\Z\To\HHomeo_{+}(\UC)\To\Homeo_{+}(\UC)\To 1$$
fits into the diagram
\[\xymatrix{0\ar[r]\ar[d]&\Z\ar[r]\ar[d]& \HHomeo_{+}(\UC)\ar[r]\ar[d]&\Homeo_{+}(\UC)\ar[r]\ar[d]& 1\ar[d]\\
	0\ar[r]&R_{\alpha}\ar[r]& \Homeo_{I_{1}}(\Ss)\ar[r]&\Homeo_{+}(\UC)\ar[r]& 1}\]
where $R_{\alpha}$ denotes integer translations in $\Ss$, the next isomorphism follows.

\begin{theorem}
  $\Homeo_{I_{1}}(\Ss)\simeq\HHomeo_{+}(\UC).$
\end{theorem}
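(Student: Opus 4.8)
The plan is to realize the claimed isomorphism as the central vertical arrow of the morphism of short exact sequences displayed just above the statement, and then to invoke the short five lemma. Concretely, define $\Phi:\HHomeo_{+}(\UC)\To\Homeo_{I_{1}}(\Ss)$ by sending a lift $F_{1}\in\HHomeo_{+}(\UC)$ of some $f_{1}\in\Homeo_{+}(\UC)$ to the induced homeomorphism $f$ built from $F_{1}$ by the construction of this subsection: since $F_{1}$ is $\Z$-equivariant it is $n\Z$-equivariant for every $n$, so it projects to $f_{n}:\R/n\Z\To\R/n\Z$, and these satisfy the compatibility condition, hence assemble into $f=\Phi(F_{1})\in\Homeo_{I_{1}}(\Ss)$. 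That $\Phi$ is a group homomorphism is immediate from functoriality of the projections $\R\To\R/n\Z$: a composition of lifts projects to the composition of the projected maps on each $\R/n\Z$, so $\Phi(F_{1}\circ G_{1})=\Phi(F_{1})\circ\Phi(G_{1})$.

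Next I would check that the diagram commutes, which amounts to two verifications. For the right-hand square, the induced homeomorphism $f=\Phi(F_{1})$ covers $f_{1}$ by construction, so the bottom projection $\Homeo_{I_{1}}(\Ss)\To\Homeo_{+}(\UC)$, $f\mapsto f_{1}$, precomposed with $\Phi$ equals the projection $\HHomeo_{+}(\UC)\To\Homeo_{+}(\UC)$ of the central extension; the right vertical arrow is the identity on $\Homeo_{+}(\UC)$ and is therefore an isomorphism. For the left-hand square, an element $k\in\Z$ of the kernel of the central extension is the integer translation $x\mapsto x+k$, a lift of the identity of $\UC$; its image under $\Phi$ projects on each $\R/n\Z$ to translation by $k$, and these assemble to translation of $\Ss$ by $\sigma(k)$, that is, to the element $r_{\sigma(k)}\in R_{\alpha}$. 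Thus the left vertical arrow is $k\mapsto r_{\sigma(k)}$, which is an isomorphism $\Z\To R_{\alpha}$ because $\sigma$ restricted to $\Z$ is injective.

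With both outer vertical arrows isomorphisms and both rows exact, the short five lemma yields at once that the central arrow $\Phi$ is an isomorphism of groups, which is the assertion. If one wants the stronger statement that $\Phi$ is an isomorphism of topological groups, it remains only to observe that $\Phi$ is continuous --- the displacement $\overline{\delta}$ of $\Phi(F_{1})$ is obtained coordinatewise from $F_{1}=\id+\delta_{1}$ and depends continuously on $F_{1}$ --- and that its inverse is continuous as well, for instance by the same diagram chase together with the discreteness of $R_{\alpha}$.

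The formal core, the five lemma, is cost-free; the real work is the commutativity of the left square, and this is where I expect the only genuine subtlety. One must pin down that the two a priori different copies of $\Z$ --- the deck group of $\R\To\UC$ sitting inside $\HHomeo_{+}(\UC)$ and the integer translations $R_{\alpha}$ of $\Ss$ --- are matched by $\Phi$, and that the matching is bijective; this rests on the injectivity of the inclusion $\Z\hookrightarrow\Ss$. Should one prefer to bypass the five lemma, the same two facts give the isomorphism directly: surjectivity of $\Phi$ is the very definition of $\Homeo_{I_{1}}(\Ss)$ (every induced homeomorphism arises from a choice of lift), and injectivity follows because two lifts of the same $f_{1}$ differ by some $k\in\Z$, whence the induced homeomorphisms differ by translation by $\sigma(k)$, which is nontrivial unless $k=0$.
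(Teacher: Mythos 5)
Your proposal is correct and follows essentially the same route as the paper: the paper derives the theorem precisely by fitting the universal central extension into the morphism of short exact sequences with $R_{\alpha}\cong\Z$ and the identity on $\Homeo_{+}(\UC)$ as the outer arrows, leaving the five-lemma conclusion implicit. Your write-up simply supplies the details the paper omits (the explicit map $\Phi$, the homomorphism property, and the commutativity of both squares), all of which check out.
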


 It is important to notice that if $f\in\Homeo_{I_{1}}(\Ss)$ is induced by $f_{1}\in\Homeo_{+}(\Ss)$ then following diagram commutes.
 	\[\xymatrix{\Ss \ar[r]^{f}\ar[d]^{\pi_{1}}& \Ss\ar[d]^{\pi_{1}} \\
 		\R/\Z \ar[r]^{f_{1}}& \R/\Z}\]
In the next theorem we will see that this property  characterizes the induced homeomorphism of degree 1 in $\Homeo_{+}(\Ss)$.
\begin{theorem}
$f\in\Homeo_{I_{1}}(\Ss)$ if and only if $f\in\Homeo_{+}(\Ss)$ and the following diagram commutes.
 	\[\xymatrix{\Ss \ar[r]^{f}\ar[d]^{\pi_{1}}& \Ss\ar[d]^{\pi_{1}} \\
 		\R/\Z \ar[r]^{f_{1}}& \R/\Z.}\]
\end{theorem}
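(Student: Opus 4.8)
The plan is to prove the two implications separately, the forward one being immediate and the converse carrying all the work. For $(\Rightarrow)$, if $f\in\Homeo_{I_{1}}(\Ss)$ is induced by $f_{1}$, then the very last commutative square displayed before the statement is already the diagram required, so there is nothing to prove. For $(\Leftarrow)$, I would assume $f\in\Homeo_{+}(\Ss)$ together with $\pi_{1}\circ f = f_{1}\circ\pi_{1}$ and pass to the covering $\R\times\Zz$. Write the lift as $F(x,k)=(F_{k}(x),k)$ with each $F_{k}=\id+\delta_{k}$ increasing, as in Section \ref{solenoid}, and fix a lift $F_{1}\colon\R\To\R$ of $f_{1}$. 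Using that $\pi_{1}$ reads on the cover as $(x,k)\mapsto\pi(x)$, where $\pi\colon\R\To\R/\Z$ is the universal covering, the commuting diagram becomes $\pi(F_{k}(x))=\pi(F_{1}(x))$ for all $x\in\R$ and $k\in\Zz$; equivalently $F_{k}(x)-F_{1}(x)\in\Z$.

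For each fixed $k$ the function $x\mapsto F_{k}(x)-F_{1}(x)$ is continuous and integer-valued, hence constant, so $F_{k}=F_{1}+c_{k}$ with $c_{k}\in\Z$. Since $k\mapsto F_{k}$ is continuous (Section \ref{solenoid}), the map $k\mapsto c_{k}$ is a continuous, hence locally constant, function $\Zz\To\Z$. I would then feed this into the equivariance relation $F_{k-t}(x+t)=F_{k}(x)+t$ for $t\in\Z$: because $F_{1}(x+t)=F_{1}(x)+t$, substituting $F_{k}=F_{1}+c_{k}$ gives $c_{k-t}=c_{k}$ for every $t\in\Z$ and every $k\in\Zz$.

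Thus $c$ is invariant under the translation action of $\Z$ on $\Zz$. This action is minimal, since every orbit $\{k+n:n\in\Z\}$ is dense because $\Z$ is dense in $\Zz$, so a continuous function that is constant along one dense orbit is globally constant: $c_{k}\equiv c\in\Z$. Consequently $F(x,k)=(F_{1}(x)+c,\,k)$ is independent of $k$, which is exactly the lift of the degree-one induced homeomorphism determined by the lift $F_{1}+c$ of $f_{1}$ (projecting $F_{1}+c$ to each $\R/n\Z$ recovers the compatible family $f_{n}$ whose inverse limit is $f$). Hence $f\in\Homeo_{I_{1}}(\Ss)$.

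The delicate point is the passage from locally constant to globally constant for $c_{k}$. Continuity alone is insufficient: a clopen decomposition of the Cantor group $\Zz$ would let $c_{k}$ jump, and such jumps correspond precisely to $f$ being induced of some higher degree $n$, where $F_{k}$ depends on $k \bmod n$, rather than of degree one. It is the minimality of the base-leaf translation, i.e. the density of $\Z$ in $\Zz$, that rules this out and forces degree one. The remaining checks, namely that the $k$-independent $F_{1}+c$ really assembles into the induced map and that $f_{1}$ inherits orientation-preservation and isotopy to the identity from $f$, are routine.
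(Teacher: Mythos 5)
Your proof is correct and follows essentially the same route as the paper: both reduce the converse to analyzing the lift on $\R\times\Zz$ and showing the commuting diagram forces the leafwise maps to be $\Z$-equivariant, equivalently independent of the fiber coordinate $k$, which is exactly the degree-one lift form. The only difference is one of rigor: the paper merely asserts the key step (the diagram makes $f$ $\sigma(\Z)$-invariant, ``therefore its lift will be as in the last theorem''), whereas you actually prove it via the integer constants $c_k$, their $\Z$-invariance from the equivariance relation, and the density of $\Z$ in $\Zz$ --- the same density argument the paper itself deploys only later, in the proof of Theorem \ref{induced}.
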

\begin{proof}
  ``$\Rightarrow$'' It follows directly from the last theorem and the definition.\newline
  ``$\Leftarrow$'' Any $f\in\Homeo_{+}(\Ss)$ preserve the leaves and the orientation and the following diagram commutes	
  \[\xymatrix{\Ss \ar[r]^{f}\ar[d]^{\pi_{1}}& \Ss\ar[d]^{\pi_{1}} \\
 		\R/\Z \ar[r]^{f_{1}}& \R/\Z,}\]

 so that $f$ is $\sigma(\Z)$--invariant. Therefore its lift will be as in the last theorem. We conclude that $f\in\Homeo_{I_{1}}(\Ss)$.
\end{proof}

Now we would like to generalize this idea to induced homeomorphisms of degree $n$. Specifically, we are thinking about elements in $\Homeo_{+}(\Ss)$ that satisfy, for some level $n\in\Z_{+}$, the following diagram
 	\[\xymatrix{\Ss \ar[r]^{f}\ar[d]^{\pi_{n}}& \Ss\ar[d]^{\pi_{n}} \\
 		\R/n\Z \ar[r]^{f_{n}}& \R/n\Z}\]
 		
We will give a description of the lifts of these kind of homeomorphisms, which we denote by $\Homeo_{I_{n}}(\Ss)$ for each $n\in\Z_{+}$, and we call these \emph{induced homeomorphism of degree $n$} by the homeomorphims $f_{n}\in\homeo_{+}(\R/n\Z)$.\\
\begin{theorem}\label{induced}
 $f\in\homeo_{I_{n}}(\Ss)$ is induced by a homeomorphisms $f_{n}\in\homeo_{+}(\UC)$ if and only if $f$ has a lift  $F:\R\times\Zz \to \R\times\Zz$ such that
$$(x,k)\goTo (F_{n}(x,k),k)=(F_{0}(x)+r(k),k)$$
where $F_{0}:\R\to\R$ is a lift of $f_{n}$ to $\R$ and $r(k)\in\{0,\dots,n-1\}$ is determined by $r(k)=i$ if  $k\in p_{n}^{-1}(i),$ and   $p_{n}$ denotes the canonical projection  $\Zz\To\Z/n\Z$.

\end{theorem}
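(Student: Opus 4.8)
The plan is to run both implications through the coordinate description of lifts recalled earlier in the section: every $F\in\HHomeo_{+}(\Ss)$ has the form $F(x,k)=(F_{k}(x),k)$ with $k\mapsto F_{k}$ continuous from $\Zz$ into $\Homeo_{+}(\R)$ and with the deck-equivariance $F_{k-t}(x+t)=F_{k}(x)+t$ for every $t\in\Z$. Since $\pi_{n}$ is the bridge between $f$ and the circle map $f_{n}$, I would first record its expression in these coordinates: writing $P:\R\times\Zz\To\Ss$ for the covering, one checks that $\pi_{n}\circ P(x,k)=x+r(k)\ (\mathrm{mod}\ n)$ is well defined precisely because $r(k-t)\equiv r(k)-t\pmod{n}$, which cancels the shift in $x$ produced by the $\Z$-action $t\cdot(x,k)=(x+t,k-t)$. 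This congruence for $r$, together with the resulting fact that $t+r(k-t)-r(k)\in n\Z$, are the two elementary facts that drive every computation below.

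For the implication ``$\Leftarrow$'', assume $F$ is the claimed lift built from a single lift $F_{0}$ of $f_{n}$ (so $F_{0}$ is an increasing homeomorphism with $F_{0}(x+n)=F_{0}(x)+n$) shifted fibrewise by $r(k)$. The one genuine computation here is to verify that $F$ descends to $\Ss$, i.e.\ that it commutes with the deck group; this reduces to the $n\Z$-equivariance of $F_{0}$ combined with $t+r(k-t)-r(k)\in n\Z$. Monotonicity in $x$ and the continuity of $k\mapsto F_{k}$ are inherited from $F_{0}$, so $f\in\Homeo_{+}(\Ss)$; substituting the formula into $\pi_{n}\circ P$ and again using that translating the argument of $F_{0}$ by $n\Z$ shifts its value by $n\Z$ then gives $\pi_{n}\circ f=f_{n}\circ\pi_{n}$, i.e.\ $f\in\homeo_{I_{n}}(\Ss)$.

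For ``$\Rightarrow$'', suppose $f\in\homeo_{I_{n}}(\Ss)$, so $\pi_{n}\circ f=f_{n}\circ\pi_{n}$ for some $f_{n}\in\Homeo_{+}(\R/n\Z)$, and fix any lift $F(x,k)=(F_{k}(x),k)$ together with any lift $F_{0}$ of $f_{n}$ to $\R$. Evaluating the commuting square through the formula for $\pi_{n}$ converts the relation $\pi_{n}\circ f=f_{n}\circ\pi_{n}$ into the congruence $F_{k}(x)+r(k)\equiv F_{0}\big(x+r(k)\big)\pmod{n}$ for all $x\in\R$, $k\in\Zz$. This pins $F_{k}$ down modulo $n$; since $F_{k}$ and $F_{0}$ are continuous and $\R$ is connected, the $n\Z$-valued discrepancy cannot jump, so it is independent of $x$ and $F_{k}$ equals the asserted fibre-shifted expression up to a single integer $n\,\phi(k)$.

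The main obstacle, and the real content of the theorem, is to show that $\phi$ is constant and can be absorbed into the choice of $F_{0}$, so that no genuinely twisted lift survives. I would substitute $F_{k}(x)=F_{0}\big(x+r(k)\big)-r(k)+n\,\phi(k)$ back into the deck-equivariance $F_{k-t}(x+t)=F_{k}(x)+t$; after cancelling the part that already satisfies equivariance (once more via $t+r(k-t)-r(k)\in n\Z$ and the $n\Z$-equivariance of $F_{0}$), the identity collapses to $\phi(k-t)=\phi(k)$ for every $t\in\Z$. Because $\Z$ is dense in $\Zz$ and $\phi$ is continuous on the totally disconnected group $\Zz$, hence locally constant, this $\Z$-invariance forces $\phi$ to be globally constant; replacing $F_{0}$ by $F_{0}+n\phi$ (still a lift of $f_{n}$) yields exactly the lift in the statement. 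I expect this density-plus-continuity step to be the delicate point, since it is precisely what excludes lifts that match $f_{n}$ fibrewise modulo $n$ yet fail to commute with the deck action globally; the continuity of $k\mapsto F_{k}$ furnished by the lifting description is what makes the argument go through.
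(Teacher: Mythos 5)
Your proposal is correct, and it reaches the theorem by a genuinely different decomposition than the paper, while also being more complete. The paper proves only the forward implication, and it does so ``from the base leaf outward'': it restricts $f$ to $\Ll_{0}$ to obtain an $n\Z$-equivariant lift $F_{0}$ of $f_{n}$, uses the deck equivariance $F_{k-t}(x+t)=F_{k}(x)+t$ to compute $F_{k}$ exactly for \emph{integer} $k$ (so that $k\mapsto F_{k}$ is constant on the integers inside each clopen set $p_{n}^{-1}(i)$), and then invokes density of $\Z$ in $\Zz$ together with continuity of $k\mapsto F_{k}$ to propagate the formula to all of $\Zz$; the converse is left as an implicit routine verification. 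You instead extract from the commuting square $\pi_{n}\circ f=f_{n}\circ\pi_{n}$ a congruence modulo $n$ valid for \emph{every} $k\in\Zz$ simultaneously, isolate the integer discrepancy $n\phi(k)$ (constant in $x$ by connectedness of $\R$), and only then use deck equivariance to show $\phi$ is $\Z$-invariant, with density-plus-continuity forcing $\phi$ to be constant and absorbable into $F_{0}$; you also carry out the ``$\Leftarrow$'' direction explicitly. Both arguments rest on the same two pillars --- deck equivariance and the density of $\Z$ in $\Zz$ combined with continuity of $k\mapsto F_{k}$ --- but yours buys two things. First, it covers the converse, which the paper omits. Second, it yields the lift in the form $F_{k}(x)=F_{0}\bigl(x+r(k)\bigr)-r(k)$, which is the expression actually compatible with the deck action: the paper's literal formula $F_{0}(x)+r(k)$ is a slip, since adding the constant $r(k)$ to $F_{0}$ commutes with the $\Z$-action only when $F_{0}-\id$ is $1$-periodic (essentially the degree-$1$ case), whereas your fibre-shifted reading is the correct statement and your equivariance computation via $t+r(k-t)-r(k)\in n\Z$ verifies it.
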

\begin{proof}
Note that the restriction of $f$ to the base leaf is an increasing $n\Z$-invariant function. Therefore  $f|_{\Ll_{0}}:\Ll_{0}\To\Ll_{0}$ is a  $\sigma(n\Z)-$invariant function, and we must have that $$f(x+\sigma(n))=f(x)+\sigma(n).$$
 Let $F_{0}:\R\To\R$ be the restriction of a lift to $\R\times\Zz$  in level 0. Then $F_{0}$  can be seen as a $n\Z$-invariant lift to $\R$ of $f_{n}$,  and satisfies
  $$f(x+n)=f(x)+n.$$
  By the equivariance with respect to the diagonal action,  for each $k\in\Z,$
$$(F_{n}(x),k)=(F_{0}(x)+r(k),k)$$
where $r(k)=k\; (\mathrm{mod}\; n).$ Since the inclusion of $\Z$  in $\Zz$ is dense and the function

$$D:\Zz\To \Homeo_{+}(\R)$$
is continuous and constant on each integer in the open set $p_{n}^{-1}(i),$ it follows that $D$ is constant on the whole open set.
\end{proof}
\begin{corollary}
If two homeomorphisms are induced by the same homeomorphism $f_{n}\in\homeo_{+}(\R/n\Z),$ then they differ by an integer.
\end{corollary}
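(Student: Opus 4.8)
The plan is to reduce everything to Theorem~\ref{induced}, which pins down the lifts of an induced homeomorphism of degree $n$ up to the only genuine freedom in the construction, namely the choice of a lift of $f_{n}$ to $\R$. Concretely, suppose $f,g\in\homeo_{I_{n}}(\Ss)$ are both induced by the same $f_{n}\in\homeo_{+}(\R/n\Z)$. Applying Theorem~\ref{induced} to each, I would fix lifts of the prescribed form
$$F(x,k)=(F_{0}(x)+r(k),k),\qquad G(x,k)=(G_{0}(x)+r(k),k),$$
where $F_{0},G_{0}:\R\To\R$ are $n\Z$-equivariant lifts of $f_{n}$ and $r(k)$ is the \emph{same} function in both cases, since $r$ depends only on $n$ (it is the reduction $k\mapsto k\ (\mathrm{mod}\ n)$).

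First I would compare $F_{0}$ and $G_{0}$. Both satisfy $q_{n}\circ F_{0}=f_{n}\circ q_{n}$ and $q_{n}\circ G_{0}=f_{n}\circ q_{n}$, where $q_{n}:\R\To\R/n\Z$ is the covering projection; hence $q_{n}\circ F_{0}=q_{n}\circ G_{0}$, so $F_{0}(x)-G_{0}(x)\in n\Z$ for every $x$. By continuity this difference is constant, so $G_{0}=F_{0}+cn$ for some $c\in\Z$ (this is exactly the deck-transformation ambiguity of $q_{n}$, whose group is $n\Z$). Substituting, $G(x,k)=(F_{0}(x)+r(k)+cn,\,k)$.

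The remaining point is to recognise this constant shift as an integer translation of $\Ss$. Let $\tau_{cn}:\R\times\Zz\To\R\times\Zz$ be the leaf translation $(x,k)\mapsto(x+cn,k)$; then $G=\tau_{cn}\circ F$ by direct inspection. Since $\tau_{cn}$ commutes with every integral translation $T_{t}$, it descends to a homeomorphism of $\Ss$, which is precisely translation by $\sigma(cn)=i(cn)$, i.e. an element of the group $R_{\alpha}$ of integer translations. Passing to $\Ss$ via $p$ gives $g(s)=f(s)+\sigma(cn)$, so $f$ and $g$ differ by the integer $cn$, as claimed (in fact by an element of $n\Z$).

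The main obstacle --- really the only place where care is needed --- is to keep the two distinct translations on $\R\times\Zz$ apart: the leaf translation $\tau_{cn}:(x,k)\mapsto(x+cn,k)$, which is the one appearing when comparing $F$ and $G$ and which descends to the \emph{nontrivial} integer translation by $\sigma(cn)$, versus the deck transformation $T_{cn}:(x,k)\mapsto(x+cn,k-cn)$, which also shifts the leaf coordinate by $cn$ but descends to the \emph{identity} on $\Ss$. The computation $G=\tau_{cn}\circ F$ (and not $T_{cn}\circ F$) holds precisely because both lifts leave the $\Zz$-coordinate untouched, so that the entire discrepancy lives in the leaf direction; once this is observed, the identification with an integer translation and the conclusion are immediate.
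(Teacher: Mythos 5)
Your proof is correct and takes essentially the same route the paper intends: the corollary is left as an immediate consequence of Theorem~\ref{induced}, the only freedom in the construction being the choice of the lift $F_{0}$ of $f_{n}$, which is unique up to the deck group $n\Z$ of $\R\To\R/n\Z$ and hence descends to an integer translation of $\Ss$ (exactly the ambiguity recorded in the paper's earlier remark on degree $1$, where the kernel is $R_{\alpha}$). Your sharper observation that the difference in fact lies in $n\Z$, together with the careful distinction between the leaf translation $\tau_{cn}$ and the deck transformation $T_{cn}$, refines but is fully consistent with the stated conclusion.
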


An important observation is that if $n|m$ for $n,m\in\Z$, then $\Homeo_{I_{n}}(\Ss)\subset\Homeo_{I_{m}}(\Ss)$ because  the following diagram commutes.

 \[\xymatrix{\Ss \ar[r]^{f}\ar[d]^{\pi_{n}}& \Ss\ar[d]^{\pi_{n}} \\
 		\R/n\Z \ar[r]^{f_{n}}\ar[d]^{p_{mn}}& \R/n\Z\ar[d]^{p_{mn}}\\
        \R/m\Z \ar[r]^{f_{m}}& \R/m\Z} \]

Here $f_{m}$ is the projection of $f_{n}$ by the covering function  $p_{mn}.$ Therefore we have an inductive system $\{\Homeo_{I_{n}}(\Ss)\}_{n\in\Z_{+}}$ with inclusion functions. Denote by $\Homeo_{I}(\Ss)$  the direct limit of this system,  then

$$\Homeo_{I}(\Ss)=\bigcup_{n\in\Z_{+}}\Homeo_{I_{n}}(\Ss).$$
\begin{theorem}
$\Homeo_{I}(\Ss)$ is an open and dense set in $\Homeo_{+}(\Ss)$
\end{theorem}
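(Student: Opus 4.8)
The plan is to identify $\Homeo_{I}(\Ss)$ with the subgroup $\Homeo_{+}^{p}(\Ss)$ of homeomorphisms having periodic displacement, and then to obtain density and openness from the dichotomy of Theorem~\ref{lp}. For the identification, Theorem~\ref{induced} says that $f\in\Homeo_{I_{n}}(\Ss)$ exactly when a lift has the form $(x,k)\mapsto(F_{0}(x)+r(k),k)$ with $F_{0}(x+n)=F_{0}(x)+n$; equivalently its base--leaf displacement $\delta_{0}=F_{0}-\id$ is $n$--periodic, so that $\delta\colon\Ss\to\R$ factors through $\pi_{n}\colon\Ss\to\R/n\Z$. Conversely, if $\delta_{0}$ is periodic and non--constant then its period $T$ must be rational---an irrational $T$ would make $\delta$ invariant under the dense subgroup generated by $\sigma(T)$ and hence force $\delta$ constant---so $\delta_{0}$ has an integer period $n$, $\delta$ factors through $\pi_{n}$, and $f\in\Homeo_{I_{n}}(\Ss)$. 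Taking the union over $n$ gives $\Homeo_{I}(\Ss)=\Homeo_{+}^{p}(\Ss)$, and since by Theorem~\ref{lp} every $\delta_{0}$ is periodic or purely limit periodic (and these are mutually exclusive), we get the disjoint decomposition $\Homeo_{+}(\Ss)=\Homeo_{I}(\Ss)\sqcup\Homeo_{+}^{lp}(\Ss)$. The theorem is thus equivalent to the two assertions that $\Homeo_{I}(\Ss)$ is dense and $\Homeo_{+}^{lp}(\Ss)$ is closed.

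For density I would use that $\Ss=\lim_{\overleftarrow{n}}\R/n\Z$, so every continuous $\delta\colon\Ss\to\R$ is a uniform limit of functions factoring through the projections $\pi_{n}$, i.e.\ of $n$--periodic displacements $\delta^{(n)}$; this is the same approximation recorded in the density theorem of Section~\ref{displacements}. Carrying the approximation out at the level of homeomorphisms (so that monotonicity of $\id+\delta^{(n)}$ is preserved for large $n$) produces elements of $\Homeo_{I_{n}}(\Ss)\subset\Homeo_{I}(\Ss)$ converging to $f$, which gives density.

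The closedness of $\Homeo_{+}^{lp}(\Ss)$---equivalently the openness of $\Homeo_{I}(\Ss)$---is the step I expect to be the main obstacle, and it is where the proof must say more than the sup--norm estimates used for density. The reason is precisely that periodicity is not stable under uniform perturbation: adding a small purely limit periodic correction to a periodic $\delta_{0}$ yields again a purely limit periodic function, so a naive neighborhood of an element of $\Homeo_{I_{n}}(\Ss)$ meets $\Homeo_{+}^{lp}(\Ss)$. The argument must therefore exploit the topology carried by $\Homeo_{+}(\Ss)$ as a group of homeomorphisms---controlling $f$ together with $f^{-1}$---or, what amounts to the same thing, the inductive--limit structure $\Homeo_{I}(\Ss)=\lim_{\overrightarrow{n}}\Homeo_{I_{n}}(\Ss)$. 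My proposal is to isolate a discrete invariant that is locally constant in this finer topology and detects periodicity: the natural candidate is the covering homomorphism $K\colon\Ss\to\Hull(\delta_{0})$ of Section~\ref{displacements}, whose target is a circle exactly on $\Homeo_{I}(\Ss)$. Showing that its ``degree'' cannot drop under small perturbations would exhibit $\Homeo_{+}^{lp}(\Ss)$ as containing all of its limit points, and the theorem would follow; making this invariant genuinely locally constant is the crux of the matter.
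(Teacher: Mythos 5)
Your first two steps coincide with the paper's own proof. The paper argues exactly as you do: a displacement of period $p/q$ has integer period $p$, so $f$ is induced of degree $p$ by Theorem~\ref{induced}, giving $\Homeo_{I}(\Ss)=\Homeo_{+}^{p}(\Ss)$ and the decomposition $\Homeo_{+}(\Ss)=\Homeo_{I}(\Ss)\sqcup\Homeo_{+}^{\lp}(\Ss)$; it then invokes the earlier theorem of Section~\ref{displacements}, which asserts that $\Homeo_{+}^{p}(\Ss)$ is dense and that $\Homeo_{+}^{\lp}(\Ss)$ is closed. Your rational-period argument and your approximation-through-finite-levels argument are in fact more detailed than what the paper writes (for the monotonicity issue you raise, one can average the displacement over the fibers of $\pi_{n}$, which converges uniformly and preserves the leafwise increasing property). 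The genuine gap is the one you flag yourself: you never prove openness; you only propose an invariant that ``should'' be locally constant.

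That gap cannot be filled, because your own perturbation remark, made precise, disproves openness in the compact-open topology (the paper never specifies its topology, but this is the natural one). Let $f\in\Homeo_{I_{n}}(\Ss)$ have displacement $\overline{\delta}$, which factors through $\pi_{n}$, and let $h(s)=s+\sigma(\overline{\eta}(s))$, where $\overline{\eta}:\Ss\To\R$ is continuous, factors through no $\pi_{N}$, and has small sup norm and leafwise Lipschitz constant $<1$; for instance $\overline{\eta}=\eps\sum_{j\geq 1}2^{-j}\cos(2\pi\,\pi_{2^{j}}/2^{j})$. Then $h\in\Homeo_{+}^{\lp}(\Ss)$, and $g:=h\circ f$ has displacement $\overline{\delta}+\overline{\eta}\circ f$. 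If this factored through some $\pi_{m}$, then, setting $N=\mathrm{lcm}(m,n)$, the function $\overline{\eta}\circ f$ would factor through $\pi_{N}$; since $n\mid N$ gives $\pi_{N}\circ f^{-1}=f_{N}^{-1}\circ\pi_{N}$, the function $\overline{\eta}$ itself would factor through $\pi_{N}$, a contradiction. So $g\in\Homeo_{+}^{\lp}(\Ss)$ by Theorem~\ref{lp}, while $g\to f$ and $g^{-1}\to f^{-1}$ uniformly as $\eps\to 0$. Hence every neighborhood of every induced homeomorphism meets $\Homeo_{+}^{\lp}(\Ss)$: in this topology $\Homeo_{I}(\Ss)$ is dense with empty interior, and no locally constant ``degree'' of $K$ can exist. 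Your fallback routes are also closed off: since $\Ss$ is compact, $\Homeo(\Ss)$ with the compact-open topology is already a topological group, so controlling inverses refines nothing, and the inductive-limit topology is a topology on $\Homeo_{I}(\Ss)$ itself, not on $\Homeo_{+}(\Ss)$, so it cannot certify openness. Note finally that the paper does not fill this gap either: its justification that $\Homeo_{+}^{\lp}(\Ss)$ is closed is the tautology that it ``contains its limit points,'' and the construction above shows that assertion is false. The difficulty you isolated is a defect of the theorem as stated, not merely of your attempt.
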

\begin{proof}
It is enough to prove that  $\Homeo_{I}(\Ss)$ is homeomorphic to $\Homeo_{+}^{p}(\Ss),$ which are the functions with periodic displacement. It is easy to see that if $f$ has periodic displacement  with period $p/q$, then $f$ is induced of degree $p$, so using Theorem \ref{induced}, we can finish the proof. It also follows that

$$\Homeo_{+}(\Ss)=\homeo_{I}(\Ss)\sqcup\Homeo_{+}^{\lp}(\Ss).$$
\end{proof}
\begin{remark}
$\Homeo_{I}(\Ss)$ has a subgroup structure with the induced operation.
\end{remark}

 		
 		
 		
 		
 		
 		

\subsection{The dynamics of induced homeomorphisms}\label{dynamic}
First we will talk about the rational dynamics. It is obvious that in this case we do not have periodic points; so the next definition replaces the role of periodic points by periodic fibers.
\begin{definition}\label{fiberperiodic}
	Let $f\in\Homeo_{+}(\Ss)$. The point $s\in\Ss$ is called  \emph{$p/q$-fiber periodic} if there  exist $p,q\in\Z_+$ such that $$f^{q}(s)=s+\sigma(p),$$
	where  $\sigma:\R\To\Ss$ is the one-parameter subgroup. We will write just $s+p$ to denote $s+\sigma(p)$. Note that the name ``fiber periodic'' is due to the fact that the point is returning periodically  to the fiber in $s$ and we refer to the orbit of $s$ as a \emph{$p/q$-fiber}.
\end{definition}

The relationship between the dynamics generated by the homeomorphism  $f_{1}\in\Homeo_{+}(\UC)$ and the dynamics generated by the induced homeomorphism $f\in\Homeo_{+}(\Ss)$
is described as follows.

\begin{lemma}
	Let $f\in\Homeo_{+}(\Ss)$ be induced by a homeomorphism $f_{1}\in\Homeo_{+}(\UC)$. If $\rho(f)=p/q$ then $f$ has a $p/q$-fiber periodic point.
\end{lemma}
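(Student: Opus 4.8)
The plan is to produce the fiber periodic point by lifting a classical Poincar\'e periodic orbit of $f_1$ through the explicit description of the lift of $f$ furnished by Theorem \ref{induced}.

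Since $f$ is induced by $f_1$, in the sense that $\pi_1\circ f=f_1\circ\pi_1$, its rotation number is the Poincar\'e rotation number of the circle homeomorphism $f_1$, computed from the action of $f$ along the base leaf $\Ll_{0}\cong\R$. Concretely, by the degree-$1$ case of Theorem \ref{induced} (where $r(k)\equiv 0$ because $n=1$), the map $f$ admits a lift $F:\R\times\Zz\To\R\times\Zz$ of the form $F(x,k)=(F_{0}(x),k)$, with $F_{0}:\R\To\R$ a lift of $f_1$, and $\rho(f)=p/q$ precisely means that $F_{0}$ has rotation number $p/q$. First I would record this identification and fix $F_{0}$ to be the lift realizing $\rho(f)=p/q$.

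Next I would invoke part (1) of Poincar\'e's theorem recalled in the Introduction: because $\rho(f_1)=p/q$, the map $f_1$ has a periodic orbit of period $q$, which lifts to a point $x_{0}\in\R$ with
$$F_{0}^{\,q}(x_{0})=x_{0}+p.$$
Fixing any $k\in\Zz$ and letting $P:\R\times\Zz\To\Ss$ denote the canonical projection, set $s:=P(x_{0},k)$. Since $P\circ F=f\circ P$, iterating gives $f^{q}(s)=P\bigl(F_{0}^{\,q}(x_{0}),k\bigr)=P(x_{0}+p,k)$; and because $P$ is a group homomorphism for the structure $\Ss=\R\times_{\Z}\Zz$ with $\sigma(t)=P(t,0)$, one has $P(x_{0}+p,k)=P(x_{0},k)+\sigma(p)=s+\sigma(p)$. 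Hence $f^{q}(s)=s+\sigma(p)$, so $s$ is a $p/q$-fiber periodic point, as desired.

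The final identity $f^{q}(s)=s+\sigma(p)$ is then routine, being immediate from the homomorphism property of $P$. The point that requires genuine care, and which I regard as the main obstacle, is the clean identification $\rho(f)=\rho(f_1)$ together with the bookkeeping of lifts: the real number attached to $F_{0}$, and hence the integer $p$ appearing in $F_{0}^{\,q}(x_{0})=x_{0}+p$, depends on the chosen lift of $f$ (different lifts, differing by the deck translations $T_{m}$, shift this value). One must therefore work throughout with the same lift relative to which $\rho(f)=p/q$ is defined, so that the $p$ in the fiber periodic conclusion coincides with the $p$ in $\rho(f)=p/q$.
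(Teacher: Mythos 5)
Your proof is correct and takes essentially the same route as the paper: both use Theorem \ref{induced} to write the lift as $F(x,k)=(F_{0}(x),k)$ with $F_{0}$ a lift of $f_{1}$, apply classical Poincar\'e theory to $F_{0}$ to get a point with $F_{0}^{q}(x_{0})=x_{0}+p$, and project its orbit to $\Ss$ using equivariance to obtain $f^{q}(s)=s+\sigma(p)$. Your write-up is in fact more careful than the paper's own sketch, which states the periodic-point condition only as $F(x)=x+1$ and leaves the projection and lift-bookkeeping implicit.
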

\begin{proof}
From Theorem \ref{induced} we know that the lift of $f$ to the covering space $\R\times\Zz$ is
$$F(x,k):=(F_{0}(x),k),$$
where $F_{0}:\R\to\R$ is a lift of $f$ to $\R$. Poincar\'e's theory says that for $F$ there exists  a point $x\in\R$ such that  $F(x)=x+1$.

The   set $$\bigg\{(F^{n}(x,k)|\; n\in\Z,\; k\in\Zz\bigg\}$$ is equivariant under the $\Z-$action and the projection to $\Ss$ satisfies the condition required.
\end{proof}

\begin{theorem}\label{dynamics}
Let $f\in\Homeo_{I}(\Ss)$ be induced by a homeomorphism $f_{1}\in\Homeo_{+}(\UC).$
\begin{enumerate}
\item If $\rho(f)=p/q,$ then any point $s\in\Ss$ is $p/q$-fiber periodic point or the orbit of $s$ is asymptotic to a $p/q$-fiber.
 \item If $\rho(f)\notin\Q$, then $f$ is semi-conjugate to the rotation by  $\rho(f).$
\end{enumerate}
\end{theorem}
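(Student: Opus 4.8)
The plan is to push everything down from the universal cover $\R\times\Zz$, exploiting that a degree--$1$ induced homeomorphism acts trivially on the transversal coordinate. By the lemma preceding the statement, $f$ lifts to $F(x,k)=(F_{0}(x),k)$, where $F_{0}:\R\To\R$ is a lift of $f_{1}$; in particular the $k$--coordinate is frozen along orbits, so for $s\in\Ss$ represented by $(x,k)$ the whole orbit $\set{f^{m}(s)}_{m\in\Z}$ is the projection of $\set{(F_{0}^{m}(x),k)}_{m\in\Z}$. Consequently $\rho(f)$ coincides with the Poincar\'e rotation number $\rho(F_{0})=\rho(f_{1})$, and both cases reduce to the classical theory of $F_{0}$ on the line together with a check that the leafwise conclusions survive the projection $p:\R\times\Zz\To\Ss$ uniformly in $k$.

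For part (1), I would assume $\rho(f)=p/q$ and set $G:=F_{0}^{q}$, so that $\rho(G)=p$ and $G(x+1)=G(x)+1$. The classical structure theorem for an orientation--preserving circle homeomorphism with rational rotation number gives that the fixed--point set $\Phi:=\set{x\in\R: G(x)=x+p}$ is closed and nonempty, and that on each component of $\R\setminus\Phi$ the sequence $G^{n}(x)-np$ converges monotonically to the two endpoints as $n\To\pm\infty$. I would then split according to whether $x\in\Phi$. If $x\in\Phi$, then $f^{q}(s)=s+\sigma(p)$, so $s$ is $p/q$--fiber periodic, which recovers and globalizes the existence statement of the preceding lemma. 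If $x\notin\Phi$, I would pick the endpoint $x_{+}\in\Phi$ approached by $G^{n}(x)-np$; the projection $s_{0}$ of $(x_{+},k)$ is $p/q$--fiber periodic, and since $G^{n}(x)-G^{n}(x_{+})\To 0$, I would conclude that the orbit of $s$ is asymptotic to the $p/q$--fiber of $s_{0}$.

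For part (2), I would assume $\rho(f)=\rho\notin\Q$. Poincar\'e's theorem furnishes a continuous, nondecreasing, degree--one map $H:\R\To\R$ with $H(x+1)=H(x)+1$ and $H\circ F_{0}=T_{\rho}\circ H$, where $T_{\rho}(x)=x+\rho$. I would lift it fiberwise to $\widehat{H}(x,k)=(H(x),k)$, observe that $\widehat{H}$ is equivariant for the $\Z$--action $t\cdot(x,k)=(x+t,k-t)$ because $H$ commutes with integer translations, and hence that $\widehat{H}$ descends to a continuous surjection $\overline{H}:\Ss\To\Ss$. A direct computation on the cover, $\widehat{H}(F(x,k))=(H(F_{0}(x)),k)=(H(x)+\rho,k)=R_{\rho}(\widehat{H}(x,k))$, shows that $\overline{H}\circ f=R_{\rho}\circ\overline{H}$, where $R_{\rho}$ is the translation $s\mapsto s+\sigma(\rho)$; this is exactly a semi--conjugation of $f$ to the rotation by $\rho(f)$.

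The two delicate points I expect to absorb most of the work are both about transferring leafwise statements to $\Ss$. In part (1) the main obstacle is making ``asymptotic to a $p/q$--fiber'' precise and checking that it holds uniformly over the Cantor transversal: because $F_{0}$ does not depend on $k$, the monotone convergence $G^{n}(x)-np\To x_{+}$ is uniform in $k$, and I would combine this with the uniform continuity of the projection (equivalently, with the fact that points close along a leaf are close in $\Ss$) to turn it into $d_{\Ss}\big(f^{qn}(s),\orb(s_{0})\big)\To 0$, extending to the full orbit by uniform continuity of $f,\dots,f^{q-1}$. In part (2) the routine but necessary checks are that $\overline{H}$ is well defined, continuous and onto; these follow from $H$ being a monotone degree--one semiconjugacy commuting with the deck group, so that it passes to the quotient $\R\times_{\Z}\Zz$ without collapsing the transversal.
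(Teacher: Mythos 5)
Your proof is correct, but it takes a genuinely different route from the paper. The paper's proof is two lines: it invokes Theorem 4.6 of the Cruz-L\'opez--Verjovsky paper (semi-conjugation for solenoidal homeomorphisms with bounded mean motion) and verifies its hypothesis by noting that the displacement of an induced homeomorphism restricts to a $\sigma(\Z)$-periodic function on each leaf, so the Birkhoff sums of the displacement have bounded deviation. You instead bypass the external machinery entirely: you exploit the frozen-transversal form of the lift, $F(x,k)=(F_{0}(x),k)$, to reduce both assertions to classical Poincar\'e theory of $F_{0}$ on the line, and then push the conclusions down through the covering projection. What the paper's route buys is brevity and a connection to the general bounded-mean-motion framework, which is the natural setting for part (2) beyond induced homeomorphisms; what your route buys is self-containedness, an explicit semi-conjugacy $\overline{H}$ obtained by descending the classical monotone degree-one map $H$, and, importantly, a complete treatment of the rational case: the cited theorem of \cite{C-V} concerns the irrational case only (as the paper itself remarks in its introduction), and the paper's preceding lemma only produces \emph{one} $p/q$-fiber periodic point, whereas your fixed-point-set argument (with $\Phi=\set{x: F_{0}^{q}(x)=x+p}$, nonempty, closed, and invariant under integer translation so that its complementary intervals are bounded, with monotone convergence of $F_{0}^{qn}(x)-np$ to their endpoints) establishes the full dichotomy of part (1) for every point of $\Ss$. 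In that sense your argument actually supplies details the paper leaves implicit. Two small points to make explicit when writing it up: state that $\Phi+1=\Phi$ (so the components of $\R\setminus\Phi$ are bounded and their endpoints lie in $\Phi$), and record that addition of $\sigma(p)$ in $\Ss$ lifts to $(x,k)\goTo(x+p,k)$, which is what makes $x\in\Phi$ translate into $f^{q}(s)=s+\sigma(p)$ and makes $R_{\rho}$ lift to $(y,k)\goTo(y+\rho,k)$ in your computation for part (2).
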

\begin{proof}
 Using Theorem 4.6 of \cite{C-V}, it is sufficient to prove that  $f$ satisfies the bounded mean motion condition. It is easy to see that this condition holds since the restriction to any leaf is a  $\sigma(\Z)$-periodic function.
\end{proof}

\end{document}